\newcommand{\REM}[1]{}
\newcommand{\mc}{MC\xspace} 
\newcommand{\mt}{MP\xspace} 
\ulposdef{\hlc}[xoffset=1pt]{\mbox{\color{cyan!30}\rule[-.8ex]{\ulwidth}{3ex}}}
\theoremstyle{remark}
\newtheorem*{remark}{Remark}
\newtheorem{proposition}{Proposition}
\begin{document}

\title{The Ensemble Approach of Column Generation for Solving Cutting Stock Problems}


\author{Mingjie Hu}
\authornote{Contributed during internship at Microsoft Research.}
\email{mingjie@fudan.edu}
\affiliation{%
  \institution{Fudan University}
  \city{Shanghai}
  \country{China}
}

\author{Jie  Yan}
\email{yanjie@ict.ac.cn}
\authornote{Contributed during full-time at Microsoft Research.}
\affiliation{%
  \institution{Microsoft Research}
  \city{Beijing}
  \country{China}
}

\author{Liting Chen}
\authornotemark[1]
\email{98chenliting@gmail.com}
\affiliation{%
  \institution{Microsoft}
  \city{Beijing}
  \country{China}
}

\author{Qingwei  Lin}
\email{qlin@microsoft.com}
\affiliation{%
  \institution{Microsoft}
  \city{Beijing}
  \country{China}
}

\author{Dongmei Zhang}
\email{dongmeiz@microsoft.com}
\affiliation{%
  \institution{Microsoft}
  \city{Beijing}
  \country{China}
}

\renewcommand{\shortauthors}{Mingjie Hu and Jie Yan, et al.}

\begin{abstract}
This paper investigates the column generation (CG) for solving cutting stock problems (CSP). Traditional CG method, which repeatedly solves a restricted master problem (RMP), often suffers from two critical issues in practice -- the loss of solution quality introduced by linear relaxation of both feasible domain and objective and the high time cost of last iterations close to convergence. We empirically find that the first issue is common in ordinary CSPs with linear cutting constraints, while the second issue is especially severe in CSPs with nonlinear cutting constraints that are often generated by approximating chance constraints. We propose an alternative approach, ensembles of multiple column generation processes. In particular, we present two methods -- \mc (multi-column) which return multiple feasible columns in each RMP iteration, and \mt (multi-path) which restarts the RMP iterations from different initialized column sets once the iteration time exceeds a given time limit. The ideas behind are same: leverage the multiple column generation pathes to compensate the loss induced by relaxation, and add earlier sub-optimal columns to accelerate convergence of RMP iterations. Besides, we give theoretical analysis on performance improvement guarantees. Experiments on cutting stock problems demonstrate that compared to traditional CG,  our method achieves significant run-time reduction on CSPs with nonlinear constraints, and dramatically improves the ratio of solve-to-optimal on CSPs with linear constraints.
\end{abstract}

\begin{CCSXML}
<ccs2012>
<concept>
<concept_id>10010405.10010481.10010484</concept_id>
<concept_desc>Applied computing~Decision analysis</concept_desc>
<concept_significance>500</concept_significance>
</concept>
<concept>
<concept_id>10010405.10010481.10010484.10011817</concept_id>
<concept_desc>Applied computing~Multi-criterion optimization and decision-making</concept_desc>
<concept_significance>300</concept_significance>
</concept>
<concept>
<concept_id>10002950.10003714.10003716.10011141.10010045</concept_id>
<concept_desc>Mathematics of computing~Integer programming</concept_desc>
<concept_significance>300</concept_significance>
</concept>
</ccs2012>
\end{CCSXML}

\ccsdesc[500]{Applied computing~Decision analysis}
\ccsdesc[300]{Mathematics of computing~Integer programming}

\keywords{Optimization, Bin Packing, Column Generation}


\maketitle

\section{Introduction}
\label{sec:introduction}
The Cutting Stock Problem (CSP)\cite{ben2005cutting} is one classic problem in operational research with widespread practical applications, such as production planning\cite{kantorovich1960mathematical} and resource allocation\cite{Katoh1998}. Although in terms of computational complexity, CSP is a NP-hard problem, it can be formulated as an integer linear programming problem and solved with Column Generation (CG) method \cite{gilmore1961csp, gilmore1963csp2}. 
CG involves iteratively generating columns to refine an initially feasible solution until an optimal solution is found, making it possible to solve problems with a large number of variables and constraints. 
Despite the great success of CG on solving CSP, there are still two issues regarding this method that are not widely addressed. The first is the gap between CSP and the relaxed linear programming problem during the iteractive process of generating columns, which can lead to a decrease of solution quality. The second is the difficulty in managing the solving time for problems with nonlinear constraints, even for moderate-sized problems. In real-world applications, these nonlinear constraints are introduced because the size of the items in the CSP can be random variables. For instance, in cloud computing, the peak resource usage of a container is a random variable determined by its assigned tasks. When dealing with such nonlinear constraints, the standard column generation method, which solves a relaxed LP problem can be time-consuming and may compromise the quality of the columns generated.

The aim of our paper is driven by two objectives. The first objective aims to address the sub-optimality caused by the gap between integer and linear programming relaxations by incorporating columns generated from diverse solving paths. The second objective is critical in real-world applications where decisions need to be made promptly. To achieve this, our strategy involves including high-quality and feasible columns in the column set at an early stage, even if they are sub-optimal with respect to the column generation iterations of the Restricted Master Problem. We propose the ensemble method for column generation. To the best of our knowledge, this is the first such work in the area.
 Our main contributions are as follows.
\begin{itemize}
    \item Empirically identify the solution quality issue and time cost issue of traditional column generation methods on solving CSP with linear and nonlinear cutting constraints. 
    \item Propose the ensemble approach of column generation, \mc (multi-column) and \mt (multi-path) methods, and give deatiled algorithms and a comprehensive analysis.
    \item Demonstrate that the proposed ensemble methods achieves significant run-time reduction on CSPs with nonlinear cutting constraints, and dramatically improves the ratio of solve-to-optimal on CSPs with linear cutting constraints at the cost of slightly more time cost.
\end{itemize}

\section{Preliminaries}
\label{sec:preliminaries}
In this section, we first introduce the CSP in the Gilmore-Gomory formulation,  and then discuss the process of solving the CSP by CG method. Besides, we describe the scenario where the column has nonlinear cutting constraints.

\subsection{Cutting Stock Problem}
Without loss of generality, we mainly consider the one-dimensional CSP. It aims to satisfy the rolls demand $b_{k}, k\in [K]$ of $K$ customers for rolls of length $v_{k}$ by cutting the least amount of rolls of length $V$ \cite{vance1998branch}.
We are interested in the Gilmore-Gomory formulation \cite{gilmore1961csp, gilmore1963csp2} of CSP, which contains so many columns that it can only be solved efficiently by the CG method.

\begin{equation}
\begin{gathered}
\min _{\boldsymbol{x}} \sum_{p \in \mathcal P} x_p \\
\text { s.t. } \sum_{p \in \mathcal P} a_{k p} x_p \geq b_k, \forall k \in [K] \\
x_p \in I, \forall p \in \mathcal P
\end{gathered}
\label{csp}
\end{equation}
where $a_{p}=(a_{1p},a_{2p},...,a_{Kp})^{T}$ denotes a feasible cutting pattern, $\mathcal{P}$ is the set of the index of all feasible patterns. The decision variable $x_{p}$ represents the number of times pattern $a_{p}$ is used. A feasible cutting pattern satisfies the following constraints:

\begin{equation}
\begin{aligned}
&\sum_{k \in[K]} a_{k p} v_k \leq V \\
&a_{k p} \in I, \forall k \in[K]
\end{aligned}
\label{eq:column_linear}
\end{equation}

The LP relaxation of formulation (\ref{csp}) provides a tighter lower bound on the optimal integer solution. However, there are so many columns in (\ref{csp}) that it is impractical to enumerate all the columns and solve them with integer programming method.

\subsection{Column Generation}
CG (Column Generation) is an algorithm specifically designed to solve large-scale linear programming (LP) problems. We consider to solve the LP relaxation of formulation~(\ref{csp}), called the master problem (MP) in CG. Instead of enumerating all columns at once, CG starts by solving the restricted master problem (RMP), a LP problem initialized with a small fraction of feasible columns. The optimal dual solution to the RMP is applied to price the columns not included in the RMP. To find feasible columns with negative reduced costs, we need to solve the following subproblem at each iteration, also known as the pricing problem (PP) in the literature.

\begin{equation}
\begin{aligned}
&\min _{\boldsymbol{a}}  1-\sum_{k \in[K]} \bar{\pi}_k a_k \\
&\text { s.t. } \sum_{k \in[K]} a_k v_k \leq V \\
&a_k \in I, \forall k \in[K]
\end{aligned}
\label{subproblem}
\end{equation}
where $\bar{\pi}=\{\pi_{1},...,\pi_{K}\}$ is the optimal dual solution to the RMP. Subproblem (\ref{subproblem}) provides a feasible column $a^{*}=(a_{1},a_{2},...,a_{K})^T$ with a minimal reduced cost. If the reduced cost is negative, we add this column to the RMP and then reoptimize RMP to get a new optimal dual solution. Otherwise, we can justify that the current solution is optimal for the MP. Finally, by limiting the variables of RMP to integers and solving the integer programming problem (IRMP), we obtain a feasible integer solution, which provides an upper bound for the optimal solution.

\subsection{Column with Nonlinear Constraints}
\label{sub:col_nonlinear}

In real-world applications, the item size $v_k$ is often a random variable, such that a feasible pattern often requires satisfying nonlinear constraints. For example, suppose $v_k \sim \mathcal{N}(\mu_k, \sigma_k)$, the domain of feasible patterns is defined as the $\{a_p\}$ where $a_p$ satisfies the following constraints.

\begin{equation}
\begin{aligned}
&\sum_{k\in[K]} \mu_k a_{k p}+D(\alpha) \sqrt{\sum_{k\in[K]}\sigma_k^2 a_{k p}} \leq V \\
&a_{k p} \in I, \forall k \in[K]
\end{aligned}
\label{eq:column_nonlinear}
\end{equation}

\section{Motivations}
\label{sec:motivation}

In practice, we found that (1) for classic cutting stock problems with linear constraints, the linear relaxation used during column generation often leads to sub-optimal results for the original CSP; (2) for cutting stock problems with nonlinear constraints, the solving time can become prohibitively large, even for middle-sized problems (e.g., $K=20$), making it unsuitable for real applications. These observations have motivated our work, which aims to address these issues and improve the performance of the CG method for CSP. In this section, we empirically analyze these two issues and further elaborate the motivations behind our work.

\subsection{Solving Time Issue}

We focus on a real-world cutting stock problem -- container scheduling in cloud\cite{yan2022solving}, which is CSP with nonlinear cutting containers. Suppose there are $K=20$ services, and each service has a number of containers ($b_k$ for the $k$th services) whose peak resource usage is modeled as a random variable with a Gaussian distribution, i.e., $v_k \sim \mathcal{N}(\mu_k, \sigma_k)$. More details on the problem setup is in Appendix~\ref{app:experiemnts}. 

\begin{figure}[h]
  \centering
  \includegraphics[width=0.45\textwidth]{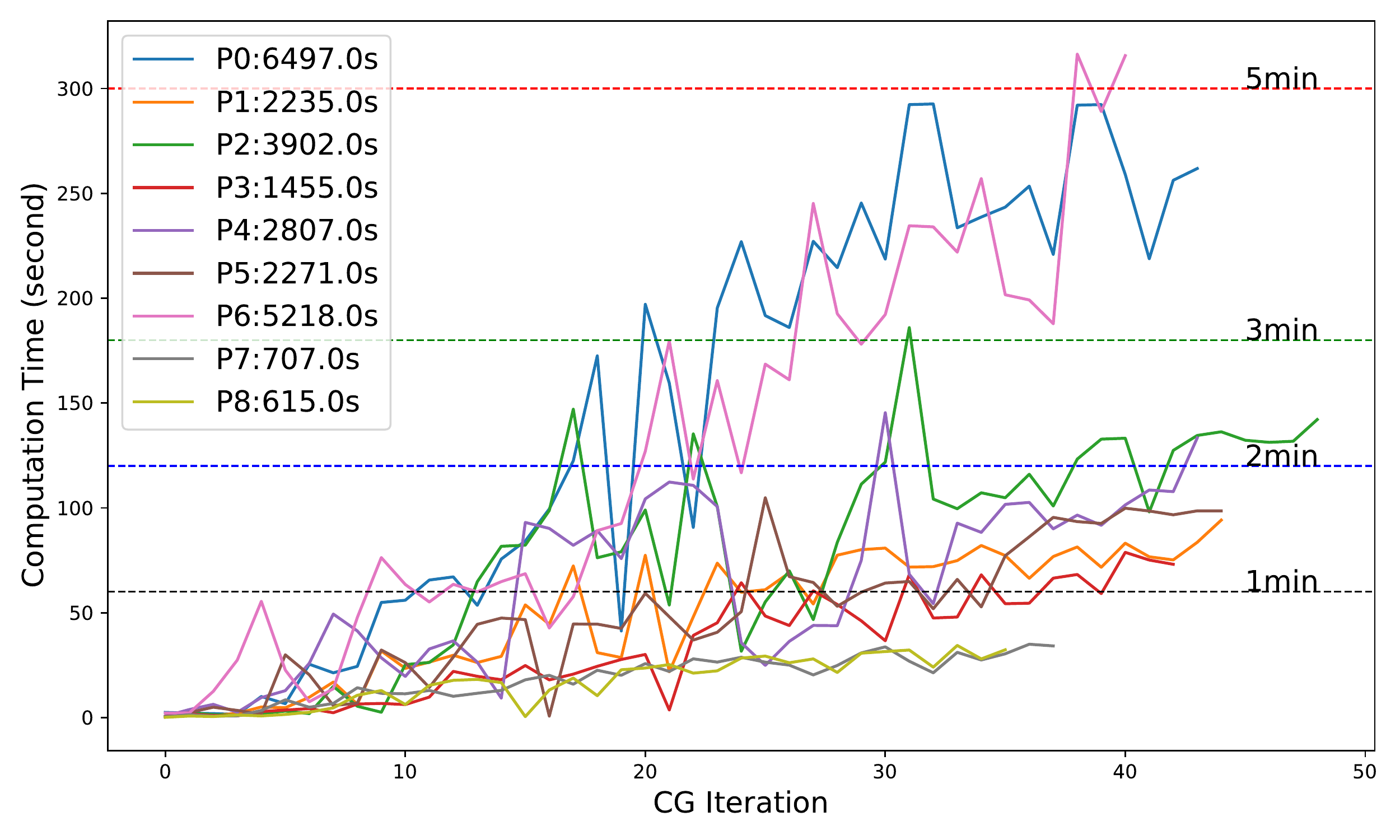}
  \caption{Runtime of Column Generation on solving 9 instances of container scheduling with nonlinear constraints}
  \label{fig:rt_baseline}
\end{figure}

The runtime of traditional column generation method on 9 problem instances are plotted in Fig.\ref{fig:rt_baseline}.The acceptable solving time for this problem is considered to be no more than 1 hour in practical terms. However, the conventional column generation method may sometimes experience an uncontrolled increase in the time it takes to find the optimal solution. In the experiments, three problems (P0, P2 and P6) take longer than one hour to solve, which is an unreasonable amount of time for real-world applications.

There are three main reasons for the long running time of the nonlinear CSP as shown in Fig.\ref{fig:rt_baseline}. Firstly, the CG sub-problem (\ref{subproblem}) is a bin-packing problem, which is an NP-hard optimization problem. Secondly, in the container scheduling example, we need to deal with nonlinear constraints (\ref{eq:column_nonlinear}), which increases the difficulty of the problem. Finally, the problem size $K$ and parameters $v_{k}$ affect the solving time. Let $\hat{v} = \min v_{k}$, then the number of items in a single pattern is bounded by $N = \lfloor \frac{V}{\hat{v}} \rfloor$. Since there are $K$ different types of items, the number of feasible patterns is bounded by $C_{N + K} ^ {N}$. Therefore, as $K$ increases or $\hat{v}$ decreases, the number of feasible patterns increases rapidly and the search space becomes larger, making it more difficult to solve.

\subsection{Solution Quality Issue}

Solution quliaty loss is induced by relaxation during column searching. Let $Z_{RMP}$ denote the optimal value of the RMP, $Z_{IRMP}$ represent the optimal value obtained by CG, $Z_{IP}^{*}$ denote the optimal value of problem (\ref{csp}). According to proposition \ref{bound}, we know that $Z_{RMP} \leq Z_{IP}^{*} \leq Z_{IRMP}$. Define the absolute gap $\delta$ equal to $Z_{IRMP} - Z_{RMP}$. Since $Z_{IP}^{*}$ and $Z_{IRMP}$ are integer, $\delta < 1$ means that $Z_{IP}^{*} = Z_{IRMP}$ and we find the optimal solution, otherwise, there may be room for improvement in the solution quality. 

Let's look into the experimental results of ordinary CSPs with linear cutting constraints. As Fig.\ref{fig:abs_gap} shows, only a small number of instances have absolute gap less than 1, so there is a large room for improvement in the quality of solution obtained by CG. In the example of $K=50$, the absolute gap of nearly $70\%$ of instances is greater than or equal to 1. In addition, as $K$ increases, the proportion of such instances gradually increases.

\begin{figure}[h]
  \centering
\includegraphics[width=0.4\textwidth]{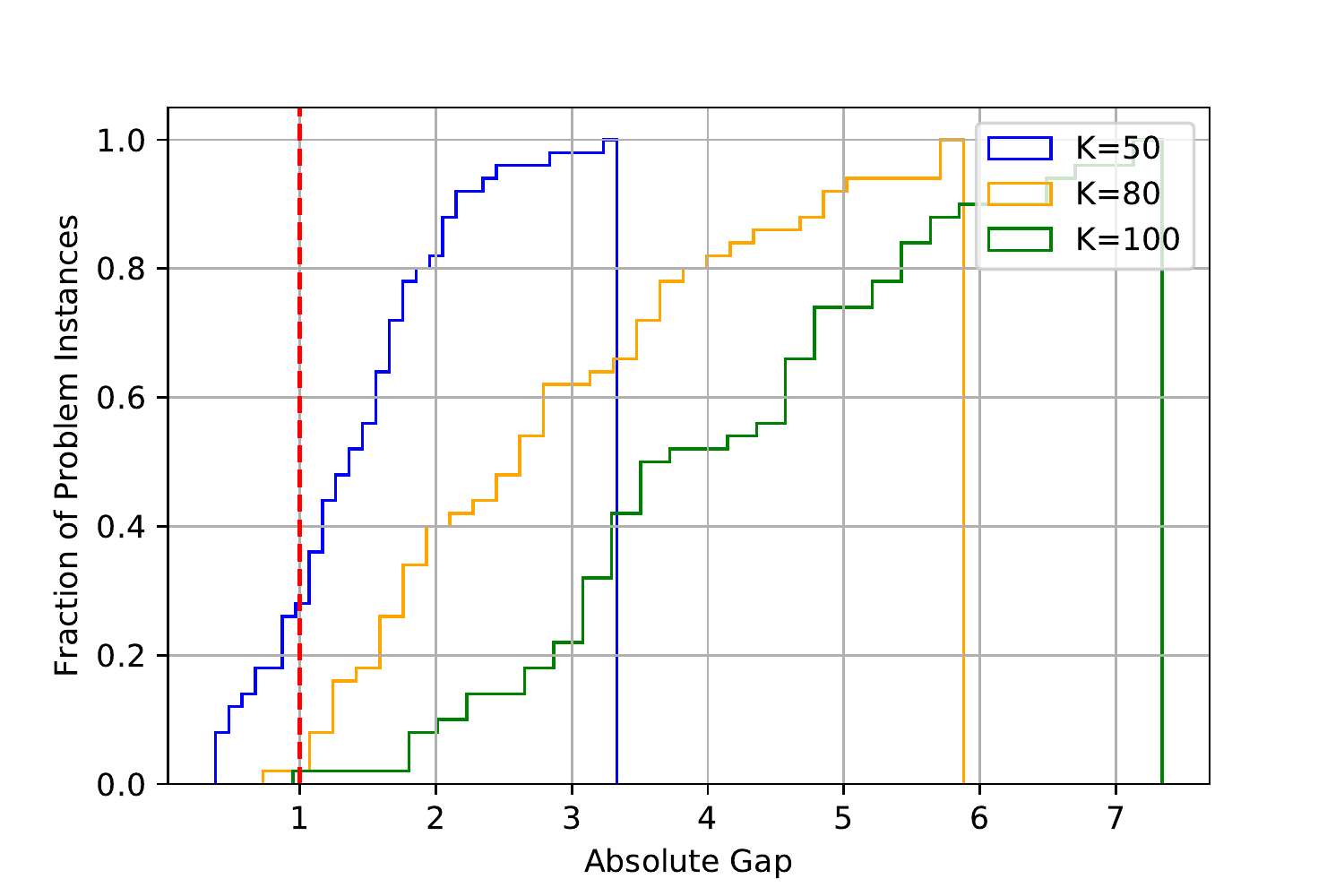}
  \caption{Empirical distribution of absolute gap of CG}
  \label{fig:abs_gap}
\end{figure}

The main reason leading to performance loss is that CG only enumerates part of the columns to ensure that the LP relaxation of problem (\ref{csp}) converges to its optimal solution. However, the columns obtained by CG may be not sufficient for solving the optimal solution of original problem. As shown in proposition \ref{bound}, we just obtain a feasible integer solution. In order to get the optimal solution of the original problem, we need to add additional columns.

\subsection{Motivation Ideas}
\label{sub:discussion}

The above two issues motivated our work in this paper. In retrospect, however, these two issues are internal to traditional CG itself: the solution quality is introduced by the relaxation from integer to linear programming, while the time cost is decided by the increasing difficulty of searching optimal solution of the RMP. 

Our idea is to enforce the basic CG with ensembles. To complement the relaxation gap, we generate columns from diverse solving paths and add high-quality columns even they are sub-optimal with respect to the RMP during CG. To accelerate the CG iterations, we actively add high-quality columns generated during CG iterations, which are sub-optimal to current RMP problem but found necessary to the final the column set. In this next section, we will give our ensemble approach in details.



\section{Ensembles of Column Generation}
\label{sec:methodology}
In this section, we detailed two ensemble CG method: \mc and \mt to address the problem of excessively long solving time and solution quality loss caused by traditional CG. Then, we present two techniques -- column selection and early stop, for further performance improvement. Besides, we give the theoretic analysis on optimality of our ensemble approach.

\subsection{\mc (Multi-Column) Method}
Column diversity is a key factor affecting algorithm performance. Since the columns obtained by CG is not sufficient to solve problem (\ref{csp}), an intuitive idea is to add additional columns to improve column diversity. 

CG starts from a limited fraction of feasible columns, and each iteration obtains a new column by solving subproblem. Instead of returning only the optimal column, the \mc method also returns $n$ sub-optimal columns with negative reduced costs to the RMP. The column generation process stops until the reduced costs of all columns are non-negative. Then, we can use the existing columns to solve problem (\ref{csp}) to get an integer solution.

The framework of \mc based CG is detailed in Algorithm \ref{mc-cg}. Note that unlike traditional CG, the $SubSub$ procedure (line 4, where $p_i$ is the returned column and $c_i$ its corresponding objective) returns multiple feasible columns. The main ideas of the algorithm is as follows:
\begin{itemize}
    \item In each iteration, by adding sub-optimal columns, the column diversity can be improved under the premise of ensuring LP convergence, thus improving the solution quality.
    \item Adding some sub-optimal columns in advance may reduce the total number of iterations of CG and accelerate the convergence process.
\end{itemize}

\begin{algorithm}
\DontPrintSemicolon
\caption{\mc Based CG Algorithm} 
\label{mc-cg}

\SetAlgoVlined
\SetKwInOut{Input}{Input}
\SetKwInOut{Output}{Output}
\SetKw{KwBy}{by}
\SetKwRepeat{Do}{do}{while}%

\Input{Parameters of Equation~\ref{csp} and \ref{eq:column_linear} (or \ref{eq:column_nonlinear}).}
\Output{Feasible solution $x_{p}, p \in \mathcal P'$ to Equation~\ref{csp}.}
\sf{
\Begin{
\nl \Do{$\max(\{c_i\})<0$}{
\tcp{Initialize the column set}
\nl $\mathcal P^{'} \gets$ ColumnInit($\{v_k\},V$)

\tcp{Solve the dual RMP}
\nl $\bar{\pi} \gets$ SolveDual ($\mathcal P^{'},\{b_k\}$)

\tcp{Solve and return multiple columns}
\nl $\{(p_i, c_i)\} \gets$ SolveSub ($\{v_k\},\bar{\pi}, V$)\\

\ForEach{$(p_i, c_i$)}{
    \nl \If {$c_i<0$ and $p_i$ is not in $\mathcal P^{'}$} {add $p_i$ to $\mathcal P^{'}$ }
}
}

\tcp{Solve Integer Programming}
SolveCSP ($\{b_k\},\mathcal P^{'}$)
}
}
\end{algorithm}
\subsection{\mt (Multi-Path) Method}

CG starts from a small number of feasible columns to search for other feasible columns with negative reduced cost. Different initial column sets lead to different search paths, which in turn leads to different final column sets. Therefore, another idea to improve column diversity is to search columns from different initial points and combine the columns found in different search paths to solve the problem (\ref{csp}).

\mt based CG consists of two stages: exploration and convergence. Hold all the columns in a base column set, and expand the set with new feasible columns found during each iteration. In the exploration stage, instead of letting CG iterate directly to convergence, after iteration $d$ step, we re-initialize the RMP by sampling a tiny fraction of columns from the current base column set. After that, search for feasible columns from the new starting point. Repeating the process $e$ times, the base column set contains columns searched from different starting points.

In the convergence stage, to ensure that the RMP converges to the optimal solution, we can search the remaining columns with negative reduced costs based on the current base column set. The framework of \mt based CG is detailed in Algorithm \ref{mt-cg}. Searching feasible columns from different starting points is conductive to increasing the diversity of columns and improving the solution quality.

\begin{algorithm}
\DontPrintSemicolon
\caption{\mt Based CG Algorithm} 
\label{mt-cg}

\SetAlgoVlined
\SetKwInOut{Input}{Input}
\SetKwInOut{Output}{Output}
\SetKw{KwBy}{by}

\Input{Parameters of Equation~\ref{csp} and \ref{eq:column_linear} (or \ref{eq:column_nonlinear}).}
\Input{The number of paths $e$ and maximum path depth $d$}
\Output{Feasible solution $x_{p}, p \in \mathcal P'$ to Equation~\ref{csp}.}

\sf{
\Begin{

\nl $\mathcal P^{'} \gets$ ColumnInit($\{v_k\},V$)

\tcp{Exploration Stage}
\nl \ForEach{$i=1 \rightarrow{e} $}{

\nl $ P_{s} \gets$ ColumnsSample ($\mathcal{P^{'}}$)

\nl \ForEach{$j=1 \rightarrow{d} $}{

\tcp{Solve the dual RMP}
\nl $\bar{\pi} \gets$ SolveDual ($\mathcal P^{'},\{b_k\}$)

\nl $(p,c) \gets$ SolveSub ($\{v_k\},\bar{\pi}, V$)\\

\nl \If {$c<0$} {add $p$ to $P_s$ and $\mathcal P^{'}$}
}
}

\tcp{Converge Stage}

\nl $\mathcal P^{'} \gets $ Conv($\{v_k\},b_k,V,\mathcal P^{'}$)

\tcp{Solve Integer Programming}
SolveCSP ($\{b_k\},\mathcal P^{'}$)
}
}
\end{algorithm}

\subsection{Implementation and Improvement}

\textbf{Implementation} 
Note that \mt and \mc can be combined in implementation. In Algorithms \ref{mc-cg} and \ref{mt-cg}, the subroutines $SolveDual$ and $SolveSub$ solve the optimization problems in standard Column Generation. For efficiency, we solve them via the solver Gurobi~\cite{gurobi}; note that other solvers or manual algorithms are possible. Details of our open sourced implementation is in appendix.

\textbf{Column Selection } 
Using the above two ensemble methods, we obtain a base column set (denoted as $\mathcal{P^{'}}$) with diverse columns at the end of the column generation process. In practice, this set may be very large and applying all columns to original problem will cause the integer programming difficult to solve. In order to filter the unnecessary columns without negative effect to the solution quality of original CSP problems, we further propose a column selection (CS) method to improve the solving efficiency.

For linear programming, the number of non-zero variables (the basic variables) in the optimal solution is equal to the number of constraints $K$. Following this insight, we develop a CS procedure by solving linear programming sequentially.

First, solve the RMP with all the columns in $\mathcal{P^{'}}$ and select the $K$ columns corresponding to the basic variables. Second, remove the selected columns from $\mathcal{P^{'}}$, solve the LP with $\mathcal{P^{'}}$ again and select a new set of column. Finally, repeating the selection process $s$ times, we obtain a subset of $\mathcal{P^{'}}$ and use these columns to solve integer programming. Removing the selected columns from $\mathcal{P^{'}}$ may result in the remaining columns not meet the feasibility condition. However, we can alleviate this problem quickly by constructing some auxiliary columns satisfying the feasibility criteria. 

\begin{algorithm}
\DontPrintSemicolon
\caption{Column Selection Procedure} 
\label{column selection}
\SetAlgoVlined
\SetKwInOut{Input}{Input}
\SetKwInOut{Output}{Output}
\SetKw{KwBy}{by}
\Input{Parameters of Equation~\ref{csp} and \ref{eq:column_linear} (or \ref{eq:column_nonlinear}).}
\Output{Pattern Set $\mathcal S \subset \mathcal{P^{'}}$}
\sf{
\Begin{
\nl \ForEach{$i=1 \rightarrow{s} $}{

\nl $ \mathcal{P^{'}} \gets $FeasibilityCheck $(\mathcal{P^{'}})$  

\nl $x_p \gets$ SolveRMP ($ \mathcal{P^{'}}, \{b_k\}$)

\nl \If {$x_p>0$} {$\mathcal S \gets \mathcal S \cup \{p\}$
$\mathcal {P^{'}} \gets \mathcal {P^{'}} \setminus \mathcal  \{p\}$}}}}
\end{algorithm}

The CS procedure is detailed in Algorithm \ref{column selection} and the intuition is as follows:
\begin{itemize}
    \item Compared to integer programming, linear programming can be solved efficiently. Therefore, the column selection process does not incur much computational cost. 
    \item RMP is the LP relaxation of corresponding integer programming with column set $\mathcal {P^{'}}$, the optimal column to the RMP may also be an optimal column to the integer programming.
\end{itemize}
\textbf{Early Stopping }
As shown in Fig.\ref{fig:rt_baseline}, for some problem instances, later iterations of CG are very time-consuming. Meanwhile, the columns found by the late search, while guaranteeing RMP convergence, may not be useful for integer programming. For example, as shown in Fig.~\ref{fig:dual_obj_bl}, the change value of dual objective values are very small, and we can relax the convergence to be no more than a small enough objective change. Therefore, we develop an early stopping procedure to improve the computation time.

The basic intuition is that as the CG approaches convergence, each new column entry provides less and less improvement over the RMP objective value. Therefore, we can monitor the improvement of the RMP objective value during the iteration process, and stop the CG when the improvement of the objective value is less than $\epsilon$ for $t$ consecutive times, so as to avoid too long iteration time. 

Since each iteration has to solve RMP to get the obtain dual variable, monitor the RMP objective value and check the stopping criterion does not impose additional computation burden.

\begin{figure}[h]
  \centering
\includegraphics[width=0.4\textwidth]{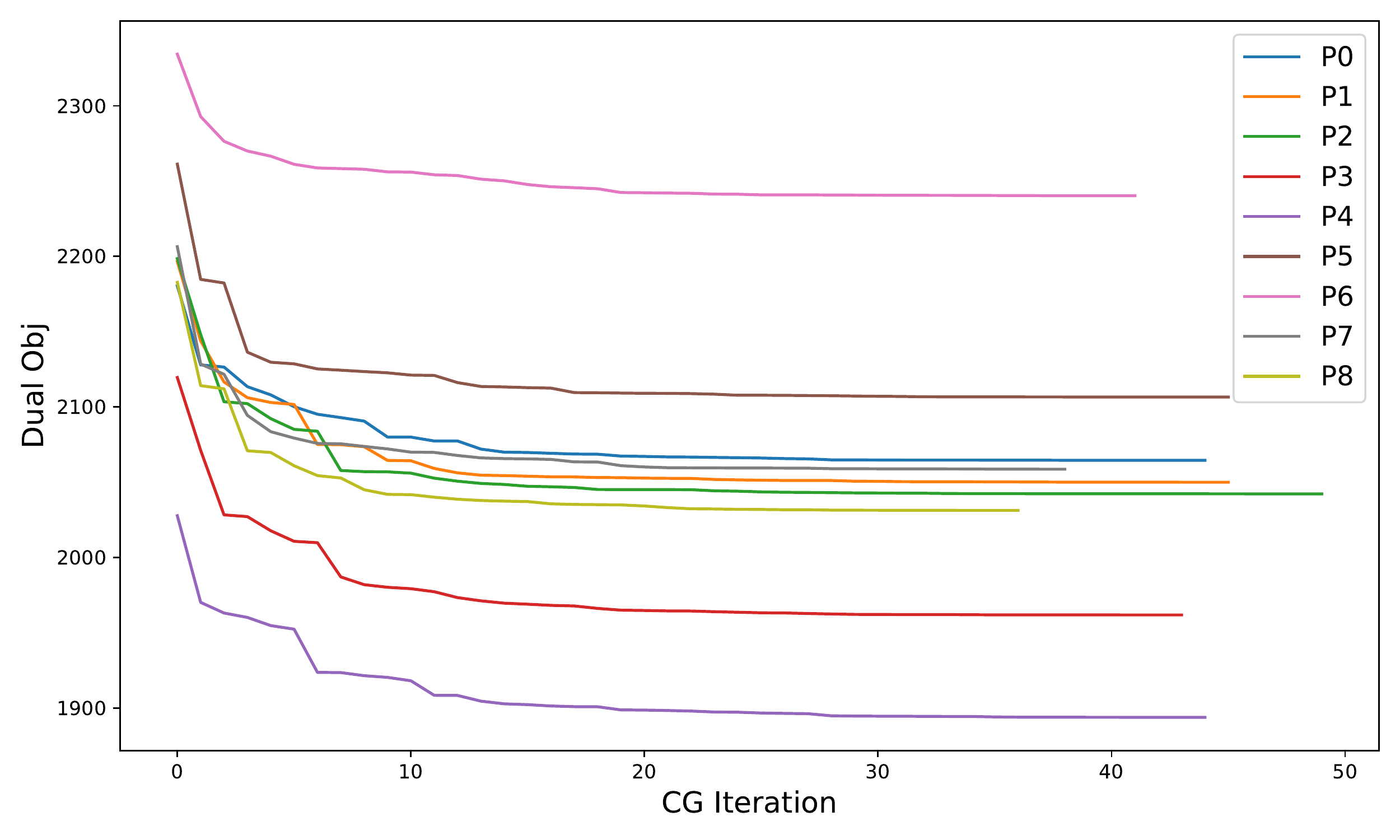}
  \caption{Dual Objectives of 9 problems of container scheduling with non-linear constraints}
  \label{fig:dual_obj_bl}
\end{figure}

\subsection{Theoretic Analysis}
\label{sub:theory}

We now analyze the convergence 
 and solution quality of the ensemble CG algorithm.

\begin{proposition}
\label{cg converge}
Assume that the feasible domain is non-empty and bounded, and every basic feasible solution of MP is nondenegerate, ensemble CG will converge to the optimal solution of MP in a finite number of steps.
\end{proposition}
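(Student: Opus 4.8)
The plan is to reduce the statement to the classical finite-termination theorem for column generation, by showing that the ensembling in \mc and \mt affects only \emph{which} columns enter the restricted master problem and \emph{when}, not the monotonicity that drives termination. The first step is to observe that the full pattern set $\mathcal{P}$ is finite: every feasible pattern is a nonnegative integer vector with $\sum_{k\in[K]} a_{kp} v_k \le V$, so each coordinate is at most $\floor{V/\hat v}$ with $\hat v=\min_k v_k$, giving $\abs{\mathcal{P}}<\infty$ (adding the nonlinear constraint (\ref{eq:column_nonlinear}) only shrinks $\mathcal{P}$). Hence there are finitely many bases that can be assembled from columns of $\mathcal{P}$ together with the surplus columns of the standard form of (\ref{csp}), finitely many basic feasible solutions of MP, and, by the nondegeneracy hypothesis, each such solution has all $K$ basic components strictly positive.

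The second step is the key structural observation: any basis used by an RMP is formed from columns of $\mathcal{P}^{'}\subseteq\mathcal{P}$, so it is also a basis of MP, and every basic feasible solution of an RMP is a basic feasible solution of MP after padding the absent variables with zeros. Therefore every RMP is itself nondegenerate. Consequently, whenever a pattern with negative reduced cost is priced in and the RMP is reoptimized, the entering variable becomes strictly positive and the RMP optimal value \emph{strictly} decreases. For \mc the same conclusion holds: enlarging the added set beyond the single most-negative-reduced-cost column can only make the reoptimized RMP value no larger than adding that one column, which already produced a strict decrease. Since the RMP optimal values lie in the finite set of objective values of basic feasible solutions of MP and decrease strictly at every iteration that adds a column, only finitely many column-adding iterations can occur.

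The third step applies this to the two algorithms. In \mc, column generation halts precisely when the pricing step reports no pattern of $\mathcal{P}$ with negative reduced cost against the current dual $\bar\pi$; by the previous paragraph this occurs after finitely many iterations. In \mt, the exploration stage performs exactly $e\cdot d$ pricing steps, which is finite by construction; re-initializing the working RMP may raise its value, but the base set $\mathcal{P}^{'}$ only grows, so no cycling across re-initializations is possible, and the convergence stage then runs ordinary column generation on the fixed finite set $\mathcal{P}^{'}$, terminating finitely by the strict-decrease argument again (combining \mt with \mc is identical, with \mc in place of the plain convergence stage). At termination the final dual $\bar\pi$ satisfies $1-\sum_{k\in[K]}\bar\pi_k a_k\ge 0$ for all $p\in\mathcal{P}$, hence $\bar\pi$ is feasible for the dual of MP; the accompanying RMP primal solution, extended by zeros, is primal feasible for MP and complementary to $\bar\pi$, so it is optimal for MP, and in particular the final RMP optimal value equals the MP optimal value.

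I expect the main obstacle to be the degeneracy bookkeeping rather than any single computation: one must be careful that the strict decrease is claimed for the full reoptimization (a sequence of simplex pivots), not merely one pivot, and that the re-initializations in \mt cannot induce a cycle. Both are handled by the two facts above --- re-initializations occur only $e$ times, each followed by at most $d$ steps, and within the convergence stage the column set is monotone --- after which the ``finite value set plus strict decrease'' argument applies verbatim. A minor point worth stating explicitly is that ``converges to the optimal solution of MP'' here means that the zero-padded final RMP optimum is an optimal solution of MP, which is exactly the standard column-generation optimality certificate recalled at the end of the third step.
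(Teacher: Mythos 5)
Your proof is correct and follows the same overall skeleton as the paper's: reduce to the classical finite-termination argument for column generation, observe that \mc only changes how many negative-reduced-cost columns enter per pricing round (which cannot hurt termination), and handle \mt by noting that the exploration stage performs at most $e\cdot d$ pricing steps while the convergence stage is ordinary CG on a monotonically growing column set. The one substantive difference is the termination mechanism. The paper's argument is essentially column counting: the pattern set $\mathcal P$ is finite and each pricing round adds at least one pattern not already in $\mathcal P'$ (a column present in an optimally solved RMP cannot have negative reduced cost), so only finitely many rounds occur; this route does not actually need the nondegeneracy hypothesis. You instead terminate via strict decrease of the RMP optimal value over the finite set of objective values of basic feasible solutions of MP, which is where you invoke nondegeneracy --- correctly, via the observation that every RMP basis is an MP basis, so nondegeneracy of MP transfers to every RMP and guarantees a positive step at each reoptimization. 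Both arguments are valid; yours has the merit of actually putting the stated hypothesis to work and of spelling out the final optimality certificate (dual feasibility of $\bar\pi$ over all of $\mathcal P$ plus complementary slackness with the zero-padded RMP optimum), which the paper only gestures at by citing the reduced-cost optimality condition.
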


\begin{proof}[Proof of Proposition~\ref{cg converge}]
According to the optimal condition of linear programming, if the reduced cost vector $\boldsymbol{c}$ satisfies that $\boldsymbol{c} \geq 0$, then the corresponding basic feasible solution $\boldsymbol{x}$ is optimal \cite{bertsimas1997introduction}. Since the feasible domain is non-empty and bounded, and every basic feasible solution of MP is nondenegerate, CG obtain a feasible column with negative reduced cost by solving subproblem and add it to RMP. The algorithm converges to the optimal solution of MP when the reduced cost of all columns is non-negative. Since the feasible column is limited, CG will stop in finite steps. Two ensemble CG methods keep the above characteristics. For \mc method, similar to CG, the algorithm converges when the reduced cost of all columns is non-negative. For \mt method,since the number of re-initialization and iteration is finite, the exploration stage stops in a finite number of steps. In the convergence stage, the algorithm adds the remaining columns with negative reduced cost and converges in a finite number of steps.
\end{proof}

\begin{proposition}
\label{bound}
Assume that the feasible domain of problem \ref{csp} is non-empty and bounded, then we can get the following bound relation:

\begin{equation}
     Z_{RMP}=Z_{MP}\leq Z^{*}_{IP}\leq Z_{IRMP}
\end{equation}

where $Z_{RMP}$ denotes the optimal value of the RMP,  $Z_{MP}$ is the optimal value of MP, $Z^{*}_{IP}$ represents the optimal value of the original problem~(\ref{csp}) and $Z_{IRMP}$ is the solution obtained by ensemble CG.
\end{proposition}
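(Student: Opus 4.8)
The plan is to establish the chain of (in)equalities from left to right, treating each link separately. The first equality $Z_{RMP} = Z_{MP}$ is exactly the termination guarantee of column generation: by Proposition~\ref{cg converge}, the ensemble CG process halts only when every feasible column has nonnegative reduced cost with respect to the final dual solution $\bar\pi$, and by LP optimality conditions (the same result cited there, \cite{bertsimas1997introduction}) this means the current RMP basic feasible solution is optimal not just for the RMP but for the full master problem MP; hence their optimal values coincide. I would state this in one or two sentences, since it is a direct consequence of the already-proven proposition. (One should note the hypothesis that the feasible domain of~(\ref{csp}) is non-empty and bounded guarantees $Z_{MP}$, $Z^{*}_{IP}$, $Z_{IRMP}$ are all finite and well-defined, so the comparisons make sense.)

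For the middle inequality $Z_{MP} \le Z^{*}_{IP}$, the argument is the standard LP-relaxation bound: MP is the linear relaxation of the integer program~(\ref{csp}) over the \emph{same} (full) column set $\mathcal P$, obtained by dropping the integrality constraints $x_p \in I$ in favor of $x_p \ge 0$. Any feasible point of~(\ref{csp}) is therefore feasible for MP with the same objective value $\sum_{p} x_p$; in particular the integer optimum is MP-feasible, so the MP optimum can only be smaller or equal. I would write this as a one-line containment-of-feasible-regions argument.

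For the last inequality $Z^{*}_{IP} \le Z_{IRMP}$, the point is that $Z_{IRMP}$ is the optimal value of the integer program restricted to the sub-collection $\mathcal P' \subseteq \mathcal P$ of columns actually generated (the IRMP). Since $\mathcal P' \subseteq \mathcal P$, every IRMP-feasible integer solution extends (by zero-padding on the missing columns) to a feasible integer solution of the full problem~(\ref{csp}) with the same objective; thus the full problem's optimum is no worse, i.e. $Z^{*}_{IP} \le Z_{IRMP}$. Here I would also remark that this requires the IRMP to be feasible at all — which holds because $\mathcal P'$ is initialized with a feasible column set (and, under the boundedness hypothesis, enlarging the column set preserves feasibility), so $Z_{IRMP}$ is finite.

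The only mild subtlety — and the step I would be most careful about — is the first equality, because it is where the ensemble variants ($\mc$ and $\mt$) differ from textbook CG: one must be sure that the \emph{stopping rule} of the ensemble algorithm still certifies master-problem optimality. For $\mc$ this is immediate (it stops exactly when $\max_i c_i \ge 0$, i.e. no negative-reduced-cost column exists, just as in plain CG). For $\mt$ one must invoke that the convergence stage (the $\mathrm{Conv}$ call) runs ordinary CG to completion on the accumulated column set, so the same nonnegative-reduced-cost certificate is obtained at termination. Everything else is routine feasible-region bookkeeping, so I expect the write-up to be short once that point is made explicit. If early stopping is enabled, strictly speaking $Z_{RMP}=Z_{MP}$ may fail and should be replaced by $Z_{RMP}\le Z_{MP}$, which still yields the displayed chain; I would add a one-line remark to that effect rather than complicate the statement.
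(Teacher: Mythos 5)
Your proposal is correct and follows essentially the same route as the paper's proof: the same three-link decomposition, with $Z_{RMP}=Z_{MP}$ justified by CG termination (all reduced costs nonnegative), $Z_{MP}\le Z^{*}_{IP}$ by containment of the integer feasible set in the relaxed one, and $Z^{*}_{IP}\le Z_{IRMP}$ by zero-padding an IRMP solution to a feasible solution of~(\ref{csp}). Your extra remarks on the ensemble stopping rules and on early stopping are sound and, if anything, more careful than the paper's one-line treatment of the first equality (the paper relegates the early-stopping case to a separate remark using Farley's bound).
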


\begin{remark}
Proposition~\ref{bound} shows that the optimal value of MP and IRMP provides a lower bound and upper bound on the optimal value of the original problem~(\ref{csp}), respectively. The optimal solution to IRMP is a feasible solution to problem~(\ref{csp}). Ensemble CG aims to reduce the integral gap between problem(~\ref{csp}) and IRMP by adding various columns into IRMP. 
We can ensure that the integral gap is bounded by $Z_{IRMP}^*$-$Z_{MP}^*$.
\end{remark}


\section{Experimental Evaluation}
\label{sec:experiments}
In this section, we assess the performance of the proposed Ensemble CG algorithm on Cutting Stock Problems (CSP) with both linear and nonlinear cutting constraints. Our test problems are based on container scheduling in cloud computing, where the container size is an integer in the case of linear cutting constraints, and a random variable with a Gaussian distribution in the case of nonlinear cutting constraints. Compared to the traditional CG algorithm, our proposed method significantly improves the ratio of solving to optimality for linear CSP and reduces the solving time for nonlinear CSP while maintaining or enhancing the solution quality. 

\subsection{CSP with Linear Cutting Constraints}
\label{sub:exp-linear}

\textbf{Problem description: } We consider the container scheduling problem where container size is an integer number. Suppose there are $K$ services, each service has $b_k$ homogeneous containers with CPU size $v_k$. On cluster side, the CPU capacity of each node is $V$. The CSP is then to minimize the number of nodes that are used to hold all containers under capacity constraints. The corresponding formulation is Eq.\ref{csp} and Eq.\ref{eq:column_linear}. 

\textbf{Evaluation on scaling $K$}
We compare the algorithm performance for different problem size $K$. We detail the experiment results with $K$ ranging from $20$ to $60$, see appendix \ref{exp re} for the results of other scales. We take $50$ test cases for each $K$ values. Our compared methods are baseline, ensemble of \mc method (\#column$=3$), ensemble of \mt method (\#path $=10$), and ensemble of combined \mc-\mt method (\#column$=3$ and \#path $=10$).
The solution quality comparison and runtime are detailed in Fig.\ref{solution and gap} and Table \ref{tab:linearcase2 time}.

In Fig. \ref{fig:mc solution} and \ref{fig:mt solution}, "win", "tie" and "lose" represents solutions of our methods better, equal and worse than baseline, respectively. We compare the number of test cases in the three categories. As shown in Fig. \ref{fig:mc solution} and \ref{fig:mt solution}, our Ensemble CG methods can significantly improve the solution quality. As $K$ increase, the solution quality of nearly 60\% of the test cases is improved. Although in a small number of cases, the solution quality is worse than baseline. 

For those cases where the absolute gap of baseline is greater than $1$, Fig \ref{fig: abs gap comparison} shows the improvement of ensemble CG on the absolute gap. Both \mc and \mt methods are effective at reducing the absolute gap. In particular, the \mt method performs better than \mc method, and the combined \mc-\mt method has the best solution quality. As shown in Table \ref{tab:linearcase2 time}, the average runtime of  Ensemble CG methods is slightly longer than baseline in most cases. However, for $K=60$, the average runtime of Ensemble CG is shorter.

\begin{figure*}
  \centering
  \begin{subfigure}{0.33\textwidth}
\centering
\includegraphics[width=1.0\textwidth]{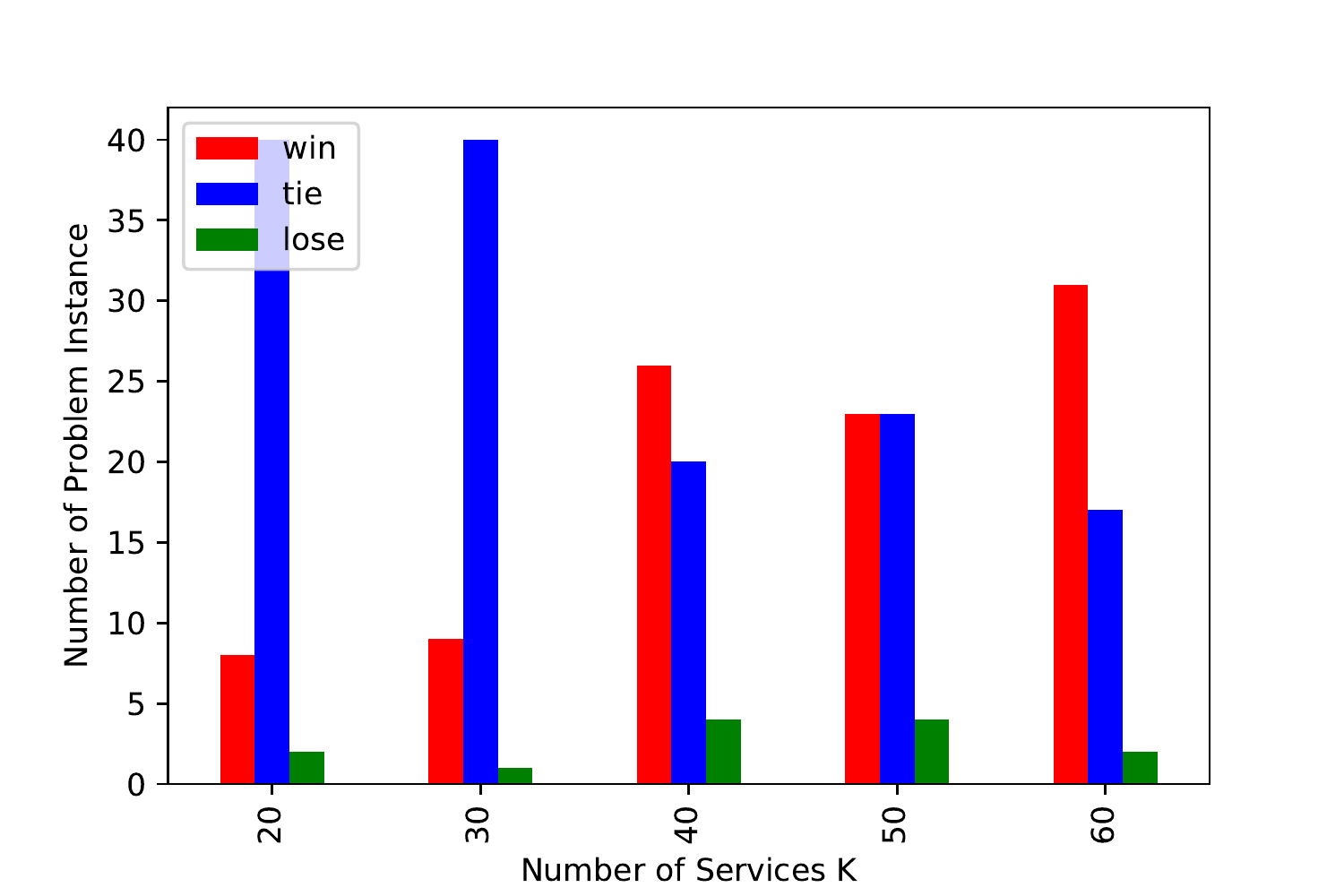}
  \caption{\mc solution quality}
  \label{fig:mc solution}
\end{subfigure}%
\begin{subfigure}{0.33\textwidth}
  \centering
\includegraphics[width=1.0\textwidth]{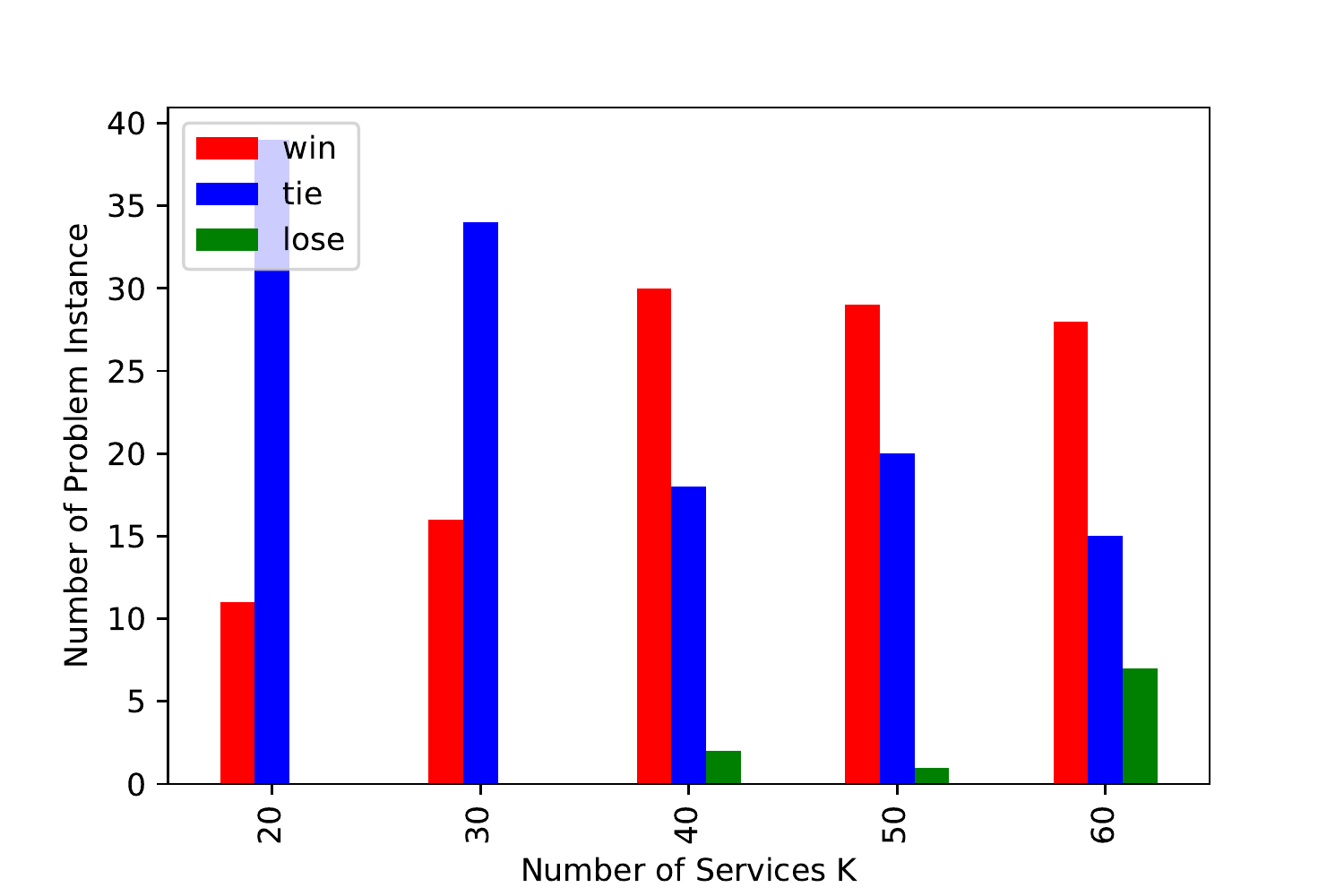}
  \caption{\mt solution quality}
  \label{fig:mt solution}
\end{subfigure}%
\begin{subfigure}{0.33\textwidth}
  \centering
\includegraphics[width=1.0\textwidth]{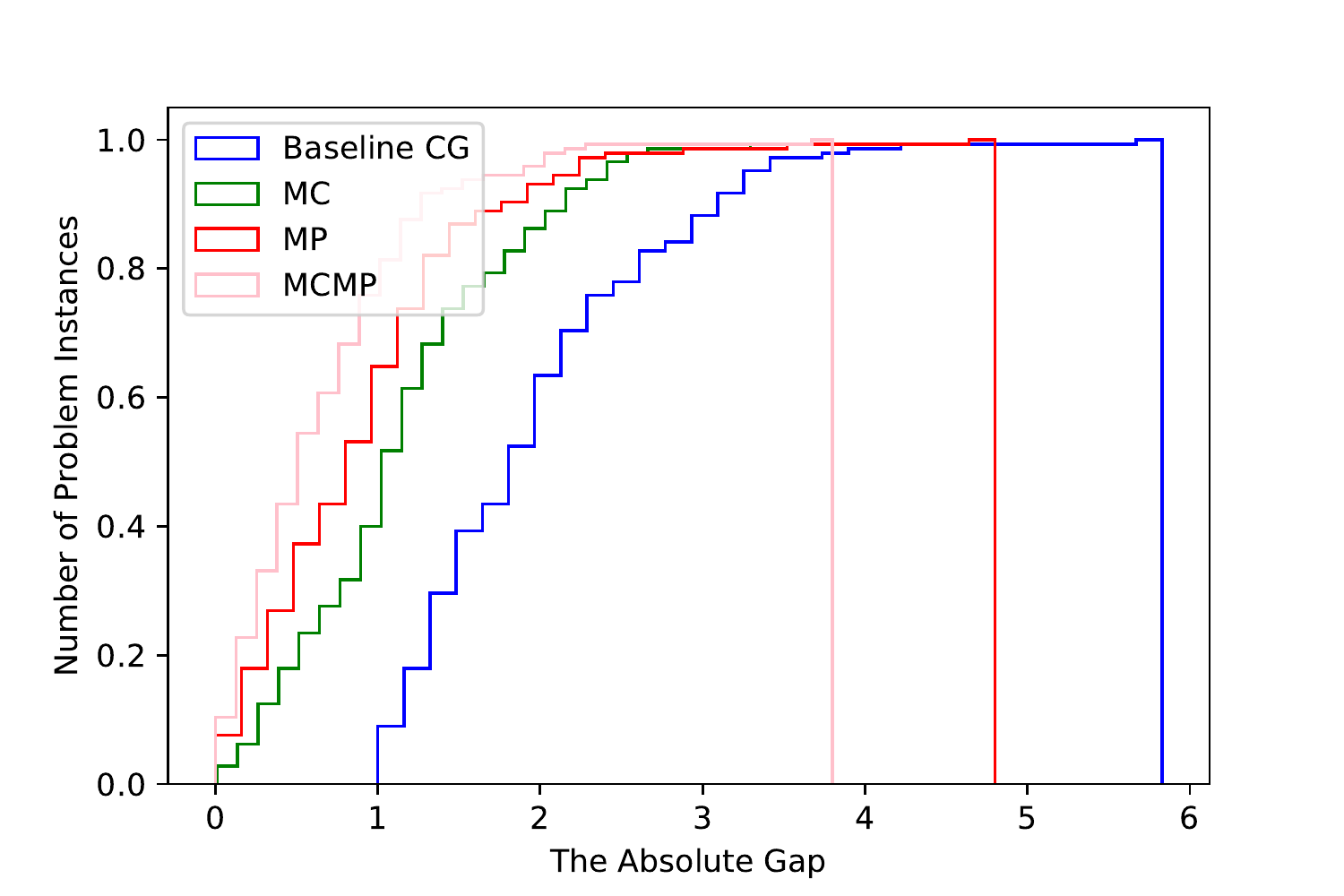}
  \caption{Solution absolute gap}
  \label{fig: abs gap comparison}
  \end{subfigure}
  \caption{Solution quality and absolute gap}
  \label{solution and gap}
\end{figure*}

\begin{table}
\caption{Runtime(s) in scaling K, averaged on 50 cases per k.}
\label{tab:linearcase2 time}
\scalebox{0.9} 
{
    \centering
    \begin{tabular}{r|r|r|r|r|}
    \toprule
        K & Baseline & \mc & \mt & MCMP\\
        \midrule
        20 &0.19&0.34&0.75&2.05\\
        30 &0.38&0.78&1.43&3.6\\
        40 &0.67&1.02&1.78&4.33\\
        50 &2.03&1.56&2.18&4.81\\
        60 &6.03&4.79&2.77&5.67\\
    \bottomrule
    \end{tabular}
}
\end{table}

\textbf{Ablation analysis }We analyze the benefits of our methods compared to baseline. The total number of columns generated and the convergence process are shown in Table \ref{tab:column num} and Fig.\ref{fig:converge_process}. As shown in Table \ref{tab:column num}, the Ensemble CG methods generate more columns than baseline, resulting in a higher diversity in the base column sets. The solutions obtained by columns selected from these sets are better. 
\begin{table}
\caption{Total number of columns generated, averaged on 50 cases per k.}
\label{tab:column num}
\scalebox{0.9} 
{
    \centering
    \begin{tabular}{r|r|r|r|r|}
    \toprule
        K & Baseline &\mc&\mt &MCMP\\
        \midrule
        20 &55.46&72.44&121.20&209.72\\
        30 &83.02&113.06&206.30&344.14\\
        40 &109.06&146.56&248.22&431.72\\
        50 &133.46&179.02&275.22&483.36\\
        60 &157.12&214.24&300.62&531.00\\
    \bottomrule
    \end{tabular}
}
\end{table}

\begin{figure}[h]
  \centering
  \includegraphics[width=0.45\textwidth]{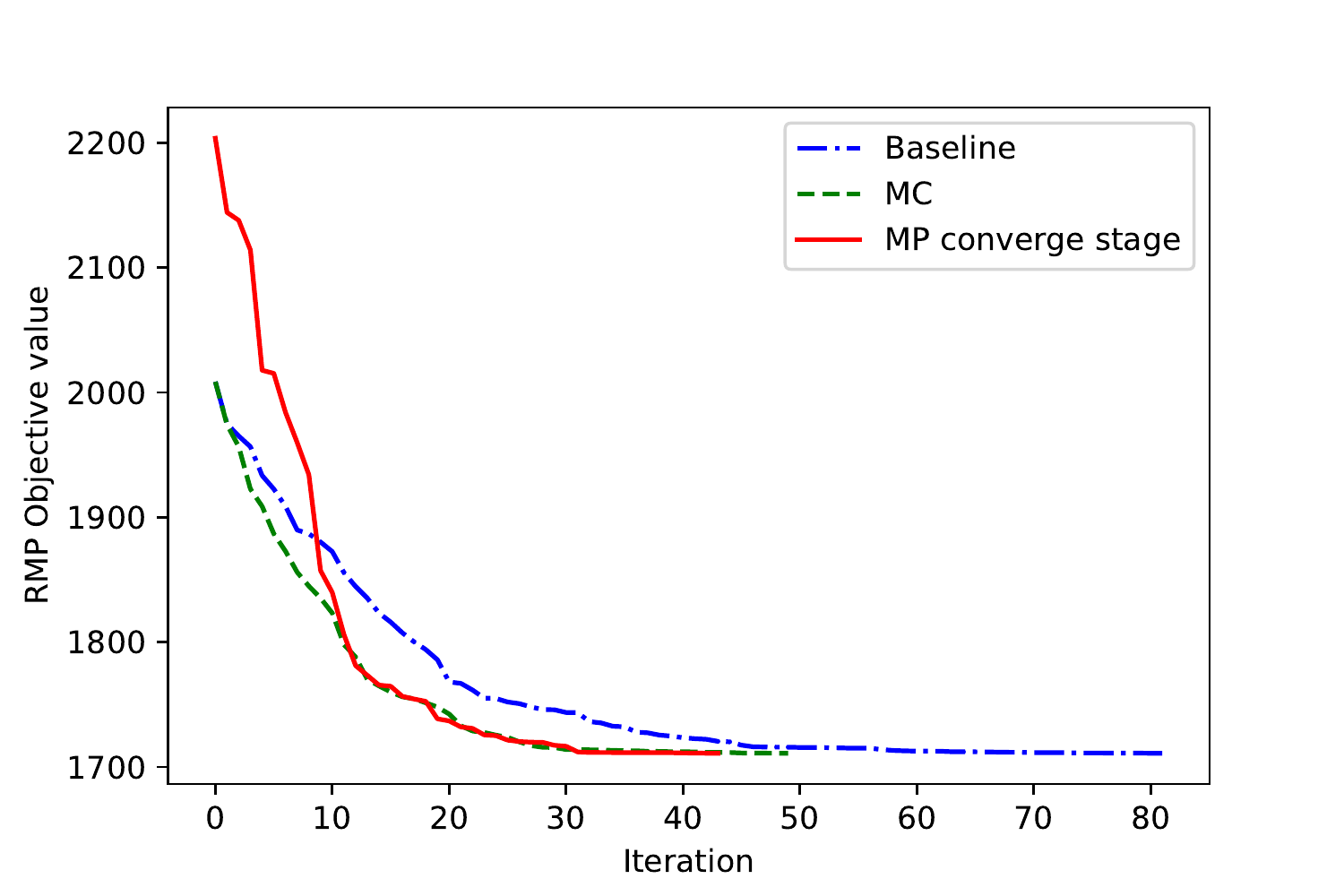}
  \caption{Convergence Process of Different Algorithm, K=60}
  \label{fig:converge_process}
\end{figure}

As shown in Fig \ref{fig:converge_process}, Ensemble CG methods converge faster than baseline. For \mc method, the total number of iterations is reduced because some sub-optimal columns are included in advance. For \mt method, since the exploration stage adds additional columns searched from different pathes, the convergence stage converges faster.

In our experiment, we set the column selection times $s$ equal to $6$ to prevent the IRMP from having too many columns and being difficult to solve. For linear CSP, the subproblem is solved quickly, so the results of adding early stopping are basically consistent with the above results.

\textbf{Hyperparameter analysis}
We test the effect of different values of hyperparameters on solution quality and runtime. We consider two hyperparameters: number of columns added per iteration for \mc and number of paths of \mt. The absolute gap and runtime are detailed in Fig.\ref{hyperparameters} and Table \ref{tab:hyperparam runtime}. As shown in Fig.\ref{hyperparameters}, the solution quality of Ensemble CG is better than baseline under different hyperparameter values. In particular, for \mc, a higher number of columns added in each iteration leads to a higher solution quality. For \mt, the more the number of search paths $e$, the better the solution quality. As for average runtime, with the increase of $n$, the runtime of \mc does not increase significantly. For \mt, however, the runtime increases as the $e$ increase. Therefore, we need to trade off the solution quality and runtime.

\begin{figure*}
\begin{subfigure}{0.5\textwidth}
  \centering
  \includegraphics[width=1.0\textwidth]{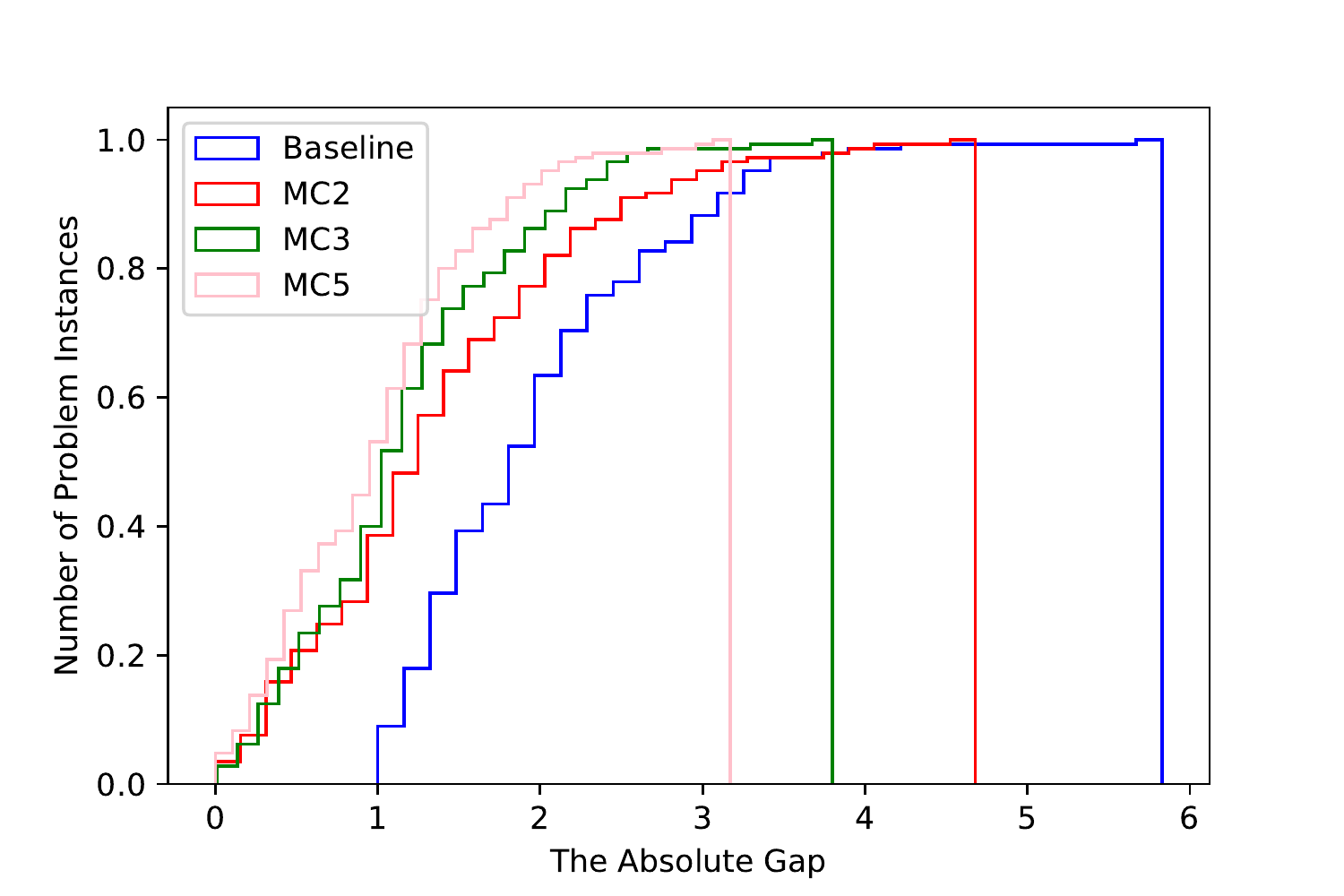}
  \caption{\#Columns added per iteration $n$}
  \label{fig:mc hyperparam}
\end{subfigure}%
\begin{subfigure}{0.5\textwidth}
  \centering
  \includegraphics[width=1.0\textwidth]{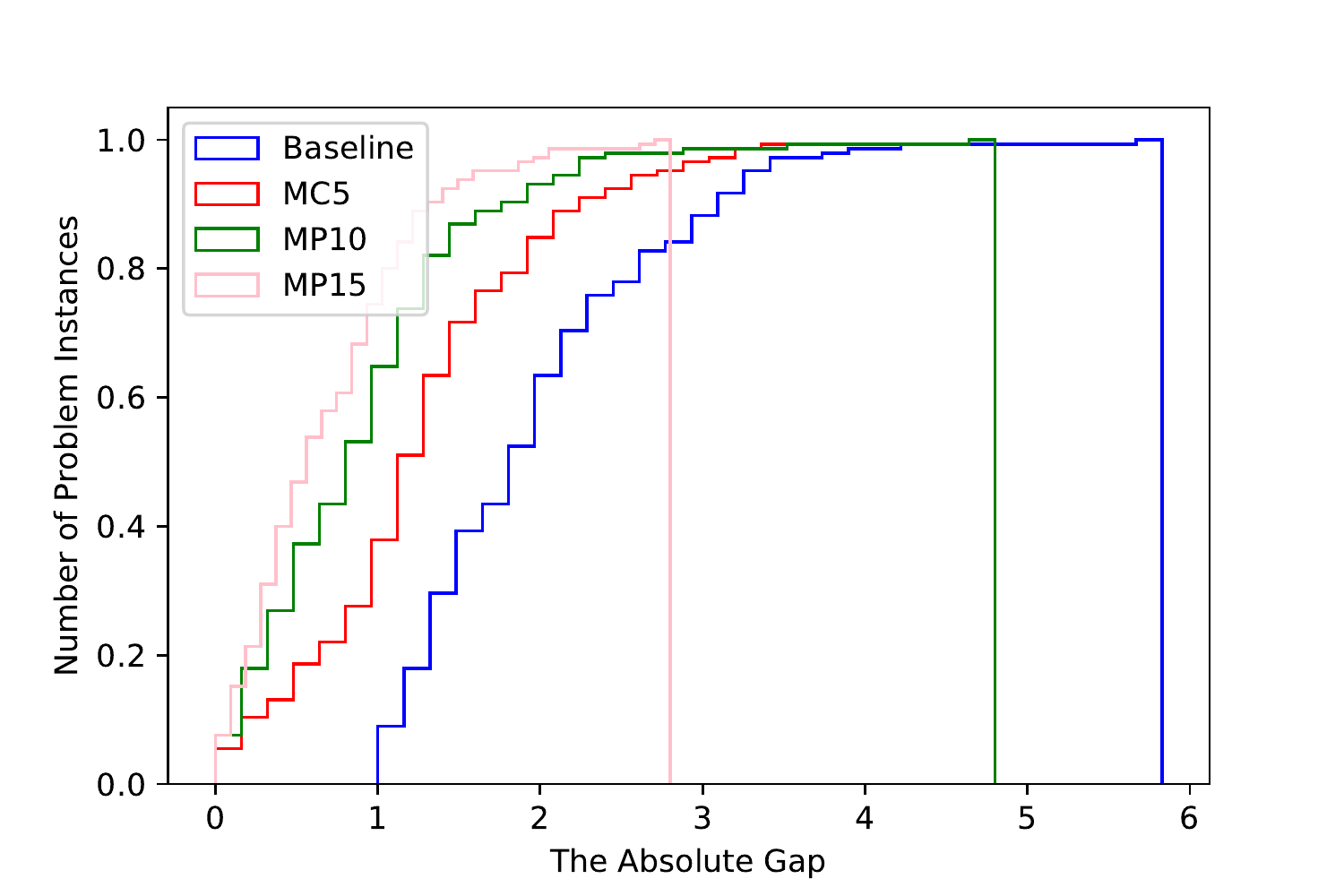}
  \caption{\#path for restart $e$}
  \label{fig:mt hyperparam}
\end{subfigure}
\caption{Effect of hyperparameters}
\label{hyperparameters}
\end{figure*}

\begin{table}
\caption{Runtime(s) in different hyperparameter values, averaged on 50 cases per k.}
\label{tab:hyperparam runtime}
\scalebox{0.9} 
{
    \centering
    \begin{tabular}{r|r|r|r|r|r|r|}
    \toprule
        K & \mc2  & \mc3 & \mc5  & \mt5 & \mt10 & \mt15\\
        \midrule
        20 &0.37&0.34&0.32&0.4&0.75&1.09\\
        30 &0.72&0.78&0.71&0.77&1.43&2.13\\
        40 &1.13&1.02&1.07&1.01&1.78&2.66\\
        50 &1.67&1.56&1.41&1.46&2.18&3.15\\
        60 &2.29&4.79&2.41&2.09&2.77&3.56\\
    \bottomrule
    \end{tabular}
}
\end{table}

\subsection{CSP with Nonlinear Cutting Constraints}
\label{sub:exp-nonlinear}

\textbf{Problem description: } We consider another container scheduling problem where container size is random variable following a Gaussian distribution, i.e., $v_k \sim \mathcal{N}(\mu_k, \sigma_k)$. Other settings are same with the problem in section \ref{sub:exp-linear}. In reality, this problem setting means all containers within a node share the CPU capacity under the chance constraint with confidence $\alpha$. For the specific details of the problem, we recommend readers to refer to the literature~\cite{yan2022solving}. The corresponding formulation is Eq.\ref{csp} and Eq.\ref{eq:column_nonlinear}.

\textbf{Evaluation on scaling $K$ } In practice, the number of services per cluster, such as K8S, varies from 5 to 15, and rarely exceeds 20 in our experience. Hence, we conduct tests with 10 to 20 services, using synthetic container sizes and quantities. For each $K$ value, we test 9 cases. The methods we compare are the baseline, an ensemble of the \mc method (with 3 columns), an ensemble of the \mt method (with 4 paths), and an ensemble of the combined \mc-\mt (with 3 columns and 4 paths). The runtime and solution quality, measured as the solve-to-optimal ratio, are summarized in Tables~\ref{tab:rtm-nonlinear} and~\ref{tab:perf-nonlinear} respectively. As seen in Table~\ref{tab:rtm-nonlinear}, our Ensemble methods can significantly reduce the runtime compared to the baseline CG. Specifically, for small-scale problems, the \mt method may have a higher time cost than the baseline, while for large-scale problems, both \mc and \mt significantly reduce the solving time. As seen in Table~\ref{tab:perf-nonlinear}, our Ensemble methods show the same or better solution quality (higher solve-to-optimal ratios) compared to the baseline, although the improvement is slight as the baseline is already close to optimal. Furthermore, we found that using an early stopping improvement, the runtime can be further reduced without sacrificing solution quality, as shown in the tables (E4C3+ES, where early stopping occurs after 4 iterations of exceeding the dual objective by 0.05).

\begin{table}
\caption{Runtime(s) in scaling K, averaged on 9 cases per k.}
\label{tab:rtm-nonlinear}
\scalebox{0.9} 
{
    \centering
    \begin{tabular}{c|r|r|r|r|r|}
    \toprule
        K & Baseline & MC3 & MP4 & MP4MC3 & MP4MC3+ES\\
        \midrule
        10 & 12.5 & $\hlc{8.1}$ & 25.0 & 20.8 & - \\
        15 & 259.9 & $\hlc{169.5}$ & 386.2 & 358.1 & 358.8\\
        20 & 2856.7 & $\hlc{1758.4}$ & $2826.7$ & $\hlc{2011.1}$ & $\hlc{1901.7}$\\
    \bottomrule
    \end{tabular}
}
\end{table}

\begin{table}
\caption{Optimal ratio in scaling K with 9 cases per k.}
\label{tab:perf-nonlinear}
\scalebox{0.9} 
{
    \centering
    \begin{tabular}{c|r|r|r|r|r|}
    \toprule
        K & Baseline & MC3 & MP4 & MP4MC3 & MP4MC3+ES\\
        \midrule
        10 & 8/9 & $\hlc{9/9}$ & 8/9 & $\hlc{9/9}$ & -\\
        15 & 8/9 & 8/9 & 8/9 & $\hlc{9/9}$ & $\hlc{9/9}$\\
        20 & 8/9 & $\hlc{9/9}$ & $\hlc{9/9}$ & $\hlc{9/9}$ & $\hlc{9/9}$\\
    \bottomrule
    \end{tabular}
}
\end{table}

\begin{figure}[h]
  \centering
  \includegraphics[width=0.45\textwidth]{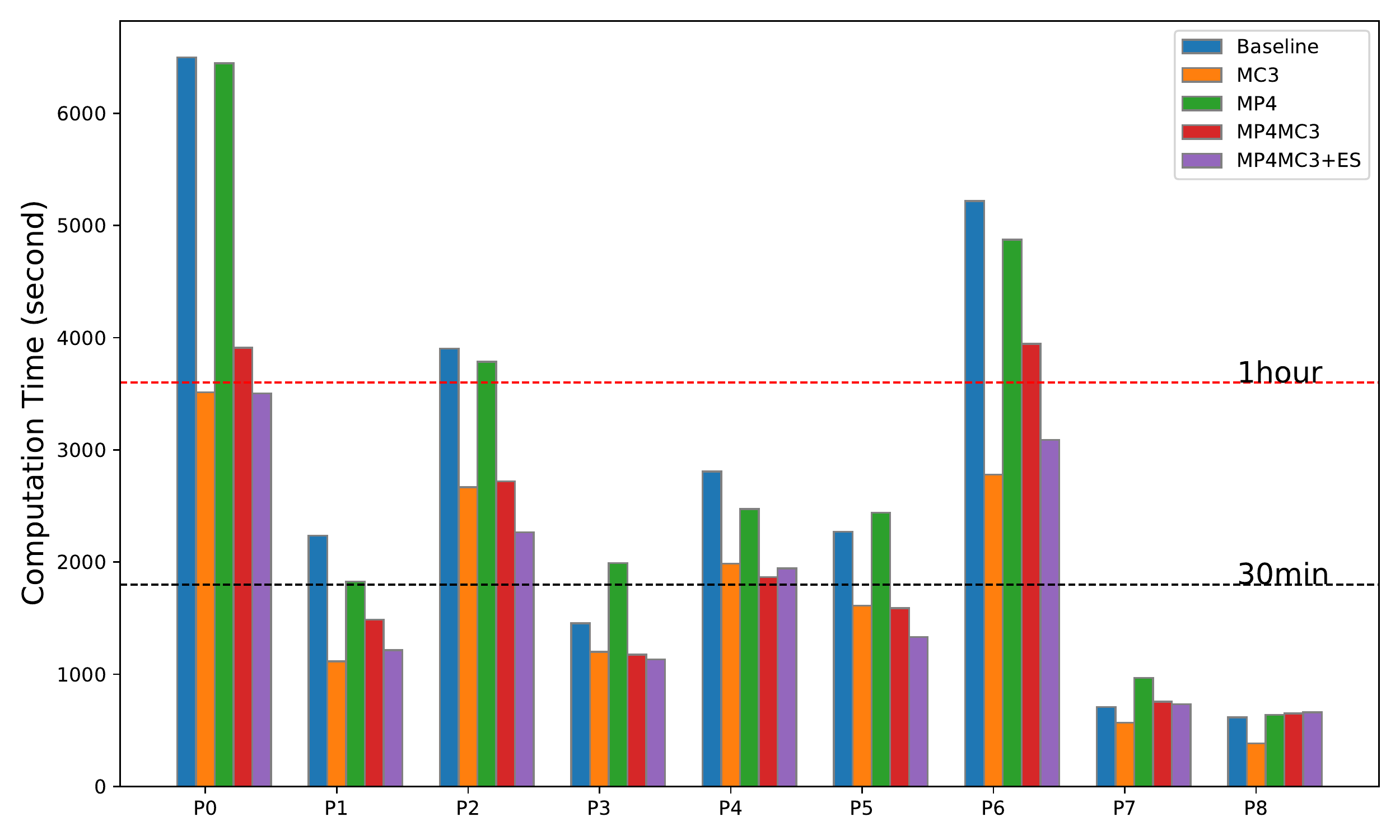}
  \caption{Comparison of runtime at K=20}
  \label{fig:rtm_k20}
\end{figure}

\begin{figure}[h]
  \centering
  \includegraphics[width=0.45\textwidth]{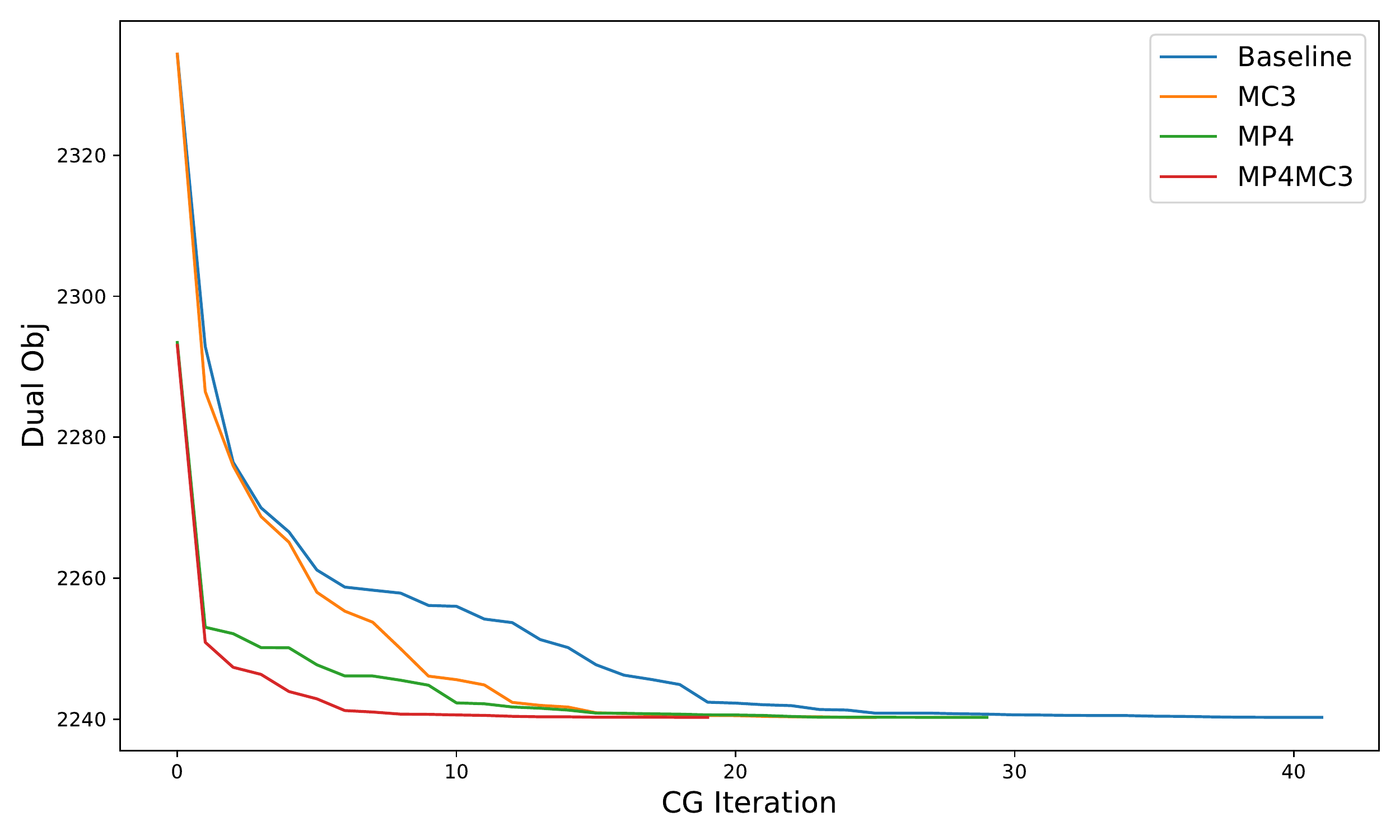}
  \caption{Convergence Processes at K=20}
  \label{fig:nonlinear_conv_k20}
\end{figure}

Now we look into the detailed run time per method on each of 9 problem cases at $K=20$ (same settings as Fig.\ref{fig:rt_baseline} and \ref{fig:dual_obj_bl}). As shown in Fig.\ref{fig:rtm_k20}, effect of ensemble varies on different cases. Even so, the qualitative effect of all methods is relatively consistent across different cases.

Finally, we investigate the objective converge processes of all methods, still taking the 9 cases of $K=20$ as examples. For comparability, we adopt all iterations of baseline and \mc, and the final path of iterations in \mt and \mt-\mc combination. The results are plotted in Fig.\ref{fig:nonlinear_conv_k20}. As shown, similar to linear cases, ensemble methods converge faster than baseline. (Note that the \mt method has to search for more short pathes of patterns before the final path, which requires extra time cost). For clearness, we did not depict the method of \mt-\mc with early stopping in the figure, which is a right-hand truncated curve of \mt-\mc.

\section{Related Work}
\label{sec:related_work}

\textbf{Cutting Stock Problem}
The Cutting Stock Problem, identified by Kantorovich~\cite{kantorovich1960mathematical}, is one of the oldest and most classic problems in combinatorial optimization. It has been widely used in different fields\cite{oliveira2021extended, parreno2021mathematical} since it was put forward. However, the CSP is an NP-hard problem, which is very difficult to solve. Gilmore and Gomory~\cite{gilmore1963csp2} proposed the CG algorithm to solve the linear relaxation of the reformulation of CSP.  It is well known that the optimal solution of the LP relaxation of the Gilmore Gomory model provides a tight lower bound on the optimal number of paper rolls~\cite{ben2005cutting} . In fact, for most one-dimensional CSP, the difference between the optimal value of integer programming and its continuous relaxation is less than 1. Rietz and Scheithauer~\cite{rietz2002tighter} constructed some problem instances with 7/6 intervals. 

In practice, motivated by real-time requirements, some approximate algorithms such as Best Fit and First Fit are usually used to obtain approximate solutions of CSP. There are also heuristics or meta-heuristics used to solve CSP problems see \cite{cerqueira2021modified}and \cite{sadykov2019primal}. For a detailed model and algorithm of CSP, we recommend this review~\cite{delorme2016bin}. 

\textbf{CSP with nonlinear constraints} In reality, the constraints of CSP may be nonlinear. Wang et al.\cite{wang2020two} consider a variant of CSP and use column-and-row generation framework to solve of LP relaxation of it. The framework involves a nonlinear IP subproblem and they exploit the linearization technique and propose a decomposition method to deal with it. Our problem includes a nonlinear includes a nonlinear constraint due to the gaussian transformation of chance constraint. Since this constraint only appears in subproblem and gurobi supports non-convex quadratic constraints, we can use gurobi to solve subproblem without affecting the MP.

\textbf{Column Generation Algorithm} Column Generation is a classical and efficient algorithm for solving large-scale LP  and also widely used in solving practical integer programming problems \cite{porumbel2017constraint, lan2022column, pan2023demand}.
 To obtain accurate integer solution, CG can be embedded into the framework of branch and bound algorithm, known as branch-and-price algorithm. \cite{barnhart1998branch,wei2020new, alkaabneh2022multi}.

Recently, there has been an increasing amount of research interested in using machine learning algorithms to improve the performance of CG. Convergence speed is one of the bottlenecks of CG. Traditional CG converges slowly and end up adding a lot of columns \cite{pessoa2018automation}. 
Morabit et al.~\cite{morabit2021machine} applied a learning model to the column selection problem, choosing the best column from a column set at each iteration. This method improves the CG solution time by 30\%. Babaki et al.~\cite{babaki2022neural} formulated the problem of choosing which column to add as a sequential decision problem and proposed a neural CG architecture, which determines column addition by predicting the optimal dual variables. They used VRP task to prove their method performed well. Shen et al.~\cite{shen2023adaptive} describe the decision variables by using the feedback of heuristic search, so that the machine learning model can adaptively predict the optimal solution. Their method can be used to solve the subproblem of CG heuristically. Besides our work, multi-column, that is to add multiple columns of the RMP's feasible solutions, has been used in practical applications especially when the allowed solving time is limited; its effect to accelerate CG convergence is also mentioned in few literature \cite{morabit2021machine}. Other literature focus on the dual oscillation problem of CG, an unstable behavior that results in a longer number of iterations of CG. A lot of stabilization techniques have been proposed to solve this problem such as adding penalties so that the dual solution does not stray too far from the center\cite{lee2011chebyshev}, or using information from the previous dual solution to smooth the current solution\cite{pessoa2018automation}. 

\section{Conclusions}
In this paper, we re-investigated the classic column generation method for solving cutting stock problems, and identified the issues solution quality and time cost in applications with linear and nonlinear constraints emerged in approximating chance constraints. We propose the ensemble approach, including \mc and \mt methods, to realize these issues. Experiments on CSPs derived from container scheduling show that our methods can significantly reduce the time cost or improve the solution quality for problems with linear or nonlinear cutting constraints. In the future, we shall explore more methods of ensemble to achieve more benefits in larger scale CSP problems.

\begin{acks}
We thank MS colleagues for valuable discussion.
\end{acks}

\bibliographystyle{ACM-Reference-Format}
\bibliography{sigmetrics-ref}


\begin{thebibliography}{28}


\ifx \showCODEN    \undefined \def \showCODEN     #1{\unskip}     \fi
\ifx \showDOI      \undefined \def \showDOI       #1{#1}\fi
\ifx \showISBNx    \undefined \def \showISBNx     #1{\unskip}     \fi
\ifx \showISBNxiii \undefined \def \showISBNxiii  #1{\unskip}     \fi
\ifx \showISSN     \undefined \def \showISSN      #1{\unskip}     \fi
\ifx \showLCCN     \undefined \def \showLCCN      #1{\unskip}     \fi
\ifx \shownote     \undefined \def \shownote      #1{#1}          \fi
\ifx \showarticletitle \undefined \def \showarticletitle #1{#1}   \fi
\ifx \showURL      \undefined \def \showURL       {\relax}        \fi
\providecommand\bibfield[2]{#2}
\providecommand\bibinfo[2]{#2}
\providecommand\natexlab[1]{#1}
\providecommand\showeprint[2][]{arXiv:#2}

\bibitem[Alkaabneh and Diabat(2022)]%
        {alkaabneh2022multi}
\bibfield{author}{\bibinfo{person}{Faisal Alkaabneh} {and} \bibinfo{person}{Ali
  Diabat}.} \bibinfo{year}{2022}\natexlab{}.
\newblock \showarticletitle{A multi-objective home healthcare delivery model
  and its solution using a branch-and-price algorithm and a two-stage
  meta-heuristic algorithm}.
\newblock \bibinfo{journal}{\emph{Transportation Research Part C: Emerging
  Technologies}} (\bibinfo{year}{2022}), \bibinfo{pages}{103838}.
\newblock


\bibitem[Babaki et~al\mbox{.}(2022)]%
        {babaki2022neural}
\bibfield{author}{\bibinfo{person}{Behrouz Babaki},
  \bibinfo{person}{Sanjay~Dominik Jena}, {and} \bibinfo{person}{Laurent
  Charlin}.} \bibinfo{year}{2022}\natexlab{}.
\newblock \showarticletitle{Neural Column Generation for Capacitated Vehicle
  Routing}. In \bibinfo{booktitle}{\emph{AAAI-22 Workshop on Machine Learning
  for Operations Research (ML4OR)}}.
\newblock


\bibitem[Barnhart et~al\mbox{.}(1998)]%
        {barnhart1998branch}
\bibfield{author}{\bibinfo{person}{Cynthia Barnhart}, \bibinfo{person}{Ellis~L
  Johnson}, \bibinfo{person}{George~L Nemhauser}, \bibinfo{person}{Martin~WP
  Savelsbergh}, {and} \bibinfo{person}{Pamela~H Vance}.}
  \bibinfo{year}{1998}\natexlab{}.
\newblock \showarticletitle{Branch-and-price: Column generation for solving
  huge integer programs}.
\newblock \bibinfo{journal}{\emph{Operations research}} \bibinfo{volume}{46},
  \bibinfo{number}{3} (\bibinfo{year}{1998}), \bibinfo{pages}{316--329}.
\newblock


\bibitem[Ben~Amor and Val{\'e}rio~de Carvalho(2005)]%
        {ben2005cutting}
\bibfield{author}{\bibinfo{person}{Hatem Ben~Amor} {and}
  \bibinfo{person}{Jos{\'e} Val{\'e}rio~de Carvalho}.}
  \bibinfo{year}{2005}\natexlab{}.
\newblock \showarticletitle{Cutting stock problems}.
\newblock In \bibinfo{booktitle}{\emph{Column generation}}.
  \bibinfo{publisher}{Springer}, \bibinfo{pages}{131--161}.
\newblock


\bibitem[Bertsimas and Tsitsiklis(1997)]%
        {bertsimas1997introduction}
\bibfield{author}{\bibinfo{person}{Dimitris Bertsimas} {and}
  \bibinfo{person}{John~N Tsitsiklis}.} \bibinfo{year}{1997}\natexlab{}.
\newblock \bibinfo{booktitle}{\emph{Introduction to linear optimization}}.
  Vol.~\bibinfo{volume}{6}.
\newblock \bibinfo{publisher}{Athena scientific Belmont, MA}.
\newblock


\bibitem[Cerqueira et~al\mbox{.}(2021)]%
        {cerqueira2021modified}
\bibfield{author}{\bibinfo{person}{Gon{\c{c}}alo~RL Cerqueira},
  \bibinfo{person}{S{\'e}rgio~S Aguiar}, {and} \bibinfo{person}{Marlos
  Marques}.} \bibinfo{year}{2021}\natexlab{}.
\newblock \showarticletitle{Modified Greedy Heuristic for the one-dimensional
  cutting stock problem}.
\newblock \bibinfo{journal}{\emph{Journal of Combinatorial Optimization}}
  \bibinfo{volume}{42}, \bibinfo{number}{3} (\bibinfo{year}{2021}),
  \bibinfo{pages}{657--674}.
\newblock


\bibitem[Delorme et~al\mbox{.}(2016)]%
        {delorme2016bin}
\bibfield{author}{\bibinfo{person}{Maxence Delorme}, \bibinfo{person}{Manuel
  Iori}, {and} \bibinfo{person}{Silvano Martello}.}
  \bibinfo{year}{2016}\natexlab{}.
\newblock \showarticletitle{Bin packing and cutting stock problems:
  Mathematical models and exact algorithms}.
\newblock \bibinfo{journal}{\emph{European Journal of Operational Research}}
  \bibinfo{volume}{255}, \bibinfo{number}{1} (\bibinfo{year}{2016}),
  \bibinfo{pages}{1--20}.
\newblock


\bibitem[Farley(1990)]%
        {farley1990note}
\bibfield{author}{\bibinfo{person}{Alan~A Farley}.}
  \bibinfo{year}{1990}\natexlab{}.
\newblock \showarticletitle{A note on bounding a class of linear programming
  problems, including cutting stock problems}.
\newblock \bibinfo{journal}{\emph{Operations Research}} \bibinfo{volume}{38},
  \bibinfo{number}{5} (\bibinfo{year}{1990}), \bibinfo{pages}{922--923}.
\newblock


\bibitem[Gilmore and Gomory(1961)]%
        {gilmore1961csp}
\bibfield{author}{\bibinfo{person}{P.~C. Gilmore} {and} \bibinfo{person}{R.~E.
  Gomory}.} \bibinfo{year}{1961}\natexlab{}.
\newblock \showarticletitle{A Linear Programming Approach to the Cutting-Stock
  Problem}.
\newblock \bibinfo{journal}{\emph{Operations Research}} \bibinfo{volume}{9},
  \bibinfo{number}{6} (\bibinfo{year}{1961}), \bibinfo{pages}{849--859}.
\newblock
\urldef\tempurl%
\url{https://doi.org/10.1287/opre.9.6.849}
\showDOI{\tempurl}


\bibitem[Gilmore and Gomory(1963)]%
        {gilmore1963csp2}
\bibfield{author}{\bibinfo{person}{P.~C. Gilmore} {and} \bibinfo{person}{R.~E.
  Gomory}.} \bibinfo{year}{1963}\natexlab{}.
\newblock \showarticletitle{A Linear Programming Approach to the Cutting Stock
  Problem—Part II}.
\newblock \bibinfo{journal}{\emph{Operations Research}} \bibinfo{volume}{11},
  \bibinfo{number}{6} (\bibinfo{year}{1963}), \bibinfo{pages}{863--888}.
\newblock
\urldef\tempurl%
\url{https://doi.org/10.1287/opre.11.6.863}
\showDOI{\tempurl}


\bibitem[{Gurobi Optimization, LLC}(2023)]%
        {gurobi}
\bibfield{author}{\bibinfo{person}{{Gurobi Optimization, LLC}}.}
  \bibinfo{year}{2023}\natexlab{}.
\newblock \bibinfo{title}{Gurobi Optimizer Reference Manual}.
\newblock
\newblock
\urldef\tempurl%
\url{http://www.gurobi.com}
\showURL{%
\tempurl}


\bibitem[Kantorovich(1960)]%
        {kantorovich1960mathematical}
\bibfield{author}{\bibinfo{person}{Leonid~V Kantorovich}.}
  \bibinfo{year}{1960}\natexlab{}.
\newblock \showarticletitle{Mathematical methods of organizing and planning
  production}.
\newblock \bibinfo{journal}{\emph{Management science}} \bibinfo{volume}{6},
  \bibinfo{number}{4} (\bibinfo{year}{1960}), \bibinfo{pages}{366--422}.
\newblock


\bibitem[Katoh and Ibaraki(1998)]%
        {Katoh1998}
\bibfield{author}{\bibinfo{person}{Naoki Katoh} {and}
  \bibinfo{person}{Toshihide Ibaraki}.} \bibinfo{year}{1998}\natexlab{}.
\newblock \bibinfo{booktitle}{\emph{Resource Allocation Problems}}.
\newblock \bibinfo{publisher}{Springer US}, \bibinfo{address}{Boston, MA},
  \bibinfo{pages}{905--1006}.
\newblock
\showISBNx{978-1-4613-0303-9}
\urldef\tempurl%
\url{https://doi.org/10.1007/978-1-4613-0303-9_14}
\showDOI{\tempurl}


\bibitem[Lan et~al\mbox{.}(2022)]%
        {lan2022column}
\bibfield{author}{\bibinfo{person}{Shaowen Lan}, \bibinfo{person}{Wenjuan Fan},
  \bibinfo{person}{Kaining Shao}, \bibinfo{person}{Shanlin Yang}, {and}
  \bibinfo{person}{Panos~M Pardalos}.} \bibinfo{year}{2022}\natexlab{}.
\newblock \showarticletitle{A column-generation-based approach for an
  integrated service planning and physician scheduling problem considering
  re-consultation}.
\newblock \bibinfo{journal}{\emph{Journal of Combinatorial Optimization}}
  \bibinfo{volume}{44}, \bibinfo{number}{5} (\bibinfo{year}{2022}),
  \bibinfo{pages}{3446--3476}.
\newblock


\bibitem[Lee and Park(2011)]%
        {lee2011chebyshev}
\bibfield{author}{\bibinfo{person}{Chungmok Lee} {and} \bibinfo{person}{Sungsoo
  Park}.} \bibinfo{year}{2011}\natexlab{}.
\newblock \showarticletitle{Chebyshev center based column generation}.
\newblock \bibinfo{journal}{\emph{Discrete Applied Mathematics}}
  \bibinfo{volume}{159}, \bibinfo{number}{18} (\bibinfo{year}{2011}),
  \bibinfo{pages}{2251--2265}.
\newblock


\bibitem[Morabit et~al\mbox{.}(2021)]%
        {morabit2021machine}
\bibfield{author}{\bibinfo{person}{Mouad Morabit}, \bibinfo{person}{Guy
  Desaulniers}, {and} \bibinfo{person}{Andrea Lodi}.}
  \bibinfo{year}{2021}\natexlab{}.
\newblock \showarticletitle{Machine-learning--based column selection for column
  generation}.
\newblock \bibinfo{journal}{\emph{Transportation Science}}
  \bibinfo{volume}{55}, \bibinfo{number}{4} (\bibinfo{year}{2021}),
  \bibinfo{pages}{815--831}.
\newblock


\bibitem[Oliveira et~al\mbox{.}(2021)]%
        {oliveira2021extended}
\bibfield{author}{\bibinfo{person}{Washington~A Oliveira},
  \bibinfo{person}{Diego~J Fiorotto}, \bibinfo{person}{Xiang Song}, {and}
  \bibinfo{person}{Dylan~F Jones}.} \bibinfo{year}{2021}\natexlab{}.
\newblock \showarticletitle{An extended goal programming model for the
  multiobjective integrated lot-sizing and cutting stock problem}.
\newblock \bibinfo{journal}{\emph{European Journal of Operational Research}}
  \bibinfo{volume}{295}, \bibinfo{number}{3} (\bibinfo{year}{2021}),
  \bibinfo{pages}{996--1007}.
\newblock


\bibitem[Pan et~al\mbox{.}(2023)]%
        {pan2023demand}
\bibfield{author}{\bibinfo{person}{Hanchuan Pan}, \bibinfo{person}{Lixing
  Yang}, {and} \bibinfo{person}{Zhe Liang}.} \bibinfo{year}{2023}\natexlab{}.
\newblock \showarticletitle{Demand-oriented integration optimization of train
  timetabling and rolling stock circulation planning with flexible train
  compositions: A column-generation-based approach}.
\newblock \bibinfo{journal}{\emph{European Journal of Operational Research}}
  \bibinfo{volume}{305}, \bibinfo{number}{1} (\bibinfo{year}{2023}),
  \bibinfo{pages}{184--206}.
\newblock


\bibitem[Parre{\~n}o and Alvarez-Valdes(2021)]%
        {parreno2021mathematical}
\bibfield{author}{\bibinfo{person}{F Parre{\~n}o} {and} \bibinfo{person}{R
  Alvarez-Valdes}.} \bibinfo{year}{2021}\natexlab{}.
\newblock \showarticletitle{Mathematical models for a cutting problem in the
  glass manufacturing industry}.
\newblock \bibinfo{journal}{\emph{Omega}}  \bibinfo{volume}{103}
  (\bibinfo{year}{2021}), \bibinfo{pages}{102432}.
\newblock


\bibitem[Pessoa et~al\mbox{.}(2018)]%
        {pessoa2018automation}
\bibfield{author}{\bibinfo{person}{Artur Pessoa}, \bibinfo{person}{Ruslan
  Sadykov}, \bibinfo{person}{Eduardo Uchoa}, {and}
  \bibinfo{person}{Fran{\c{c}}ois Vanderbeck}.}
  \bibinfo{year}{2018}\natexlab{}.
\newblock \showarticletitle{Automation and combination of linear-programming
  based stabilization techniques in column generation}.
\newblock \bibinfo{journal}{\emph{INFORMS Journal on Computing}}
  \bibinfo{volume}{30}, \bibinfo{number}{2} (\bibinfo{year}{2018}),
  \bibinfo{pages}{339--360}.
\newblock


\bibitem[Porumbel and Clautiaux(2017)]%
        {porumbel2017constraint}
\bibfield{author}{\bibinfo{person}{Daniel Porumbel} {and}
  \bibinfo{person}{Fran{\c{c}}ois Clautiaux}.} \bibinfo{year}{2017}\natexlab{}.
\newblock \showarticletitle{Constraint aggregation in column generation models
  for resource-constrained covering problems}.
\newblock \bibinfo{journal}{\emph{INFORMS Journal on Computing}}
  \bibinfo{volume}{29}, \bibinfo{number}{1} (\bibinfo{year}{2017}),
  \bibinfo{pages}{170--184}.
\newblock


\bibitem[Rietz et~al\mbox{.}(2002)]%
        {rietz2002tighter}
\bibfield{author}{\bibinfo{person}{J{\"U}rgen Rietz}, \bibinfo{person}{Guntram
  Scheithauer}, {and} \bibinfo{person}{Johannes Terno}.}
  \bibinfo{year}{2002}\natexlab{}.
\newblock \showarticletitle{Tighter bounds for the gap and non-IRUP
  constructions in the one-dimensional cutting stock problem}.
\newblock \bibinfo{journal}{\emph{Optimization}} \bibinfo{volume}{51},
  \bibinfo{number}{6} (\bibinfo{year}{2002}), \bibinfo{pages}{927--963}.
\newblock


\bibitem[Sadykov et~al\mbox{.}(2019)]%
        {sadykov2019primal}
\bibfield{author}{\bibinfo{person}{Ruslan Sadykov},
  \bibinfo{person}{Fran{\c{c}}ois Vanderbeck}, \bibinfo{person}{Artur Pessoa},
  \bibinfo{person}{Issam Tahiri}, {and} \bibinfo{person}{Eduardo Uchoa}.}
  \bibinfo{year}{2019}\natexlab{}.
\newblock \showarticletitle{Primal heuristics for branch and price: The assets
  of diving methods}.
\newblock \bibinfo{journal}{\emph{INFORMS Journal on Computing}}
  \bibinfo{volume}{31}, \bibinfo{number}{2} (\bibinfo{year}{2019}),
  \bibinfo{pages}{251--267}.
\newblock


\bibitem[Shen et~al\mbox{.}(2023)]%
        {shen2023adaptive}
\bibfield{author}{\bibinfo{person}{Yunzhuang Shen}, \bibinfo{person}{Yuan Sun},
  \bibinfo{person}{Xiaodong Li}, \bibinfo{person}{Andrew Eberhard}, {and}
  \bibinfo{person}{Andreas Ernst}.} \bibinfo{year}{2023}\natexlab{}.
\newblock \showarticletitle{Adaptive Solution Prediction for Combinatorial
  Optimization}.
\newblock \bibinfo{journal}{\emph{European Journal of Operational Research}}
  (\bibinfo{year}{2023}).
\newblock


\bibitem[Vance(1998)]%
        {vance1998branch}
\bibfield{author}{\bibinfo{person}{Pamela~H Vance}.}
  \bibinfo{year}{1998}\natexlab{}.
\newblock \showarticletitle{Branch-and-price algorithms for the one-dimensional
  cutting stock problem}.
\newblock \bibinfo{journal}{\emph{Computational optimization and applications}}
  \bibinfo{volume}{9}, \bibinfo{number}{3} (\bibinfo{year}{1998}),
  \bibinfo{pages}{211--228}.
\newblock


\bibitem[Wang et~al\mbox{.}(2020)]%
        {wang2020two}
\bibfield{author}{\bibinfo{person}{Danni Wang}, \bibinfo{person}{Fan Xiao},
  \bibinfo{person}{Lei Zhou}, {and} \bibinfo{person}{Zhe Liang}.}
  \bibinfo{year}{2020}\natexlab{}.
\newblock \showarticletitle{Two-dimensional skiving and cutting stock problem
  with setup cost based on column-and-row generation}.
\newblock \bibinfo{journal}{\emph{European Journal of Operational Research}}
  \bibinfo{volume}{286}, \bibinfo{number}{2} (\bibinfo{year}{2020}),
  \bibinfo{pages}{547--563}.
\newblock


\bibitem[Wei et~al\mbox{.}(2020)]%
        {wei2020new}
\bibfield{author}{\bibinfo{person}{Lijun Wei}, \bibinfo{person}{Zhixing Luo},
  \bibinfo{person}{Roberto Baldacci}, {and} \bibinfo{person}{Andrew Lim}.}
  \bibinfo{year}{2020}\natexlab{}.
\newblock \showarticletitle{A new branch-and-price-and-cut algorithm for
  one-dimensional bin-packing problems}.
\newblock \bibinfo{journal}{\emph{INFORMS Journal on Computing}}
  \bibinfo{volume}{32}, \bibinfo{number}{2} (\bibinfo{year}{2020}),
  \bibinfo{pages}{428--443}.
\newblock


\bibitem[Yan et~al\mbox{.}(2022)]%
        {yan2022solving}
\bibfield{author}{\bibinfo{person}{Jie Yan}, \bibinfo{person}{Yunlei Lu},
  \bibinfo{person}{Liting Chen}, \bibinfo{person}{Si Qin},
  \bibinfo{person}{Yixin Fang}, \bibinfo{person}{Qingwei Lin},
  \bibinfo{person}{Thomas Moscibroda}, \bibinfo{person}{Saravan Rajmohan},
  {and} \bibinfo{person}{Dongmei Zhang}.} \bibinfo{year}{2022}\natexlab{}.
\newblock \showarticletitle{Solving the Batch Stochastic Bin Packing Problem in
  Cloud: A Chance-constrained Optimization Approach}. In
  \bibinfo{booktitle}{\emph{Proceedings of the 28th ACM SIGKDD Conference on
  Knowledge Discovery and Data Mining}}. \bibinfo{pages}{2169--2179}.
\newblock


\end{thebibliography}

\clearpage
\appendix

\section{Supplementary Content to Section \ref{sub:theory}}
\label{app:proofs}

\begin{proof}[Proof of Proposition~\ref{bound}]

Let $\mathcal{S}$ denotes the column set obtained by ensemble CG. $\mathcal{P}$ represents all feasible columns of problem (\ref{csp}). Then, we have that $\mathcal{S} \subset \mathcal{P}$ .By definition, we have the following equations: \\
$Z_{RMP}=min\{\sum_{p \in \mathcal S} x_p| \sum_{p \in \mathcal S} a_{k p} x_p \geq b_k, \forall k \in [K], x_p \geq 0, \forall p \in \mathcal S\}$, \\
$Z_{MP}=min\{\sum_{p \in \mathcal P} x_p| \sum_{p \in \mathcal P} a_{k p} x_p \geq b_k, \forall k \in [K], x_p \geq 0, \forall p \in \mathcal P\}$, \\
$Z^{*}_{IP}=min\{\sum_{p \in \mathcal P} x_p| \sum_{p \in \mathcal P} a_{k p} x_p \geq b_k, \forall k \in [K], x_p \in I, \forall p \in \mathcal P\}$, \\ 
$Z_{IRMP}=min\{\sum_{p \in \mathcal S} x_p| \sum_{p \in \mathcal S} a_{k p} x_p \geq b_k, \forall k \in [K], x_p \in I, \forall p \in \mathcal S\}$.

$Z_{RMP}=Z_{MP}$ is trivial.  Since we know that RMP converges to the optimal solution of MP when the CG process ternimates. The base column set $\mathcal P^{'}$ covers the columns corresponding to the optimal basis variables of the MP and the CS process will also keep these columns in $\mathcal S$. Therefore, the optimal value of these two LP problems are equal.

We now show that $Z_{MP}\leq Z^{*}_{IP}$. Since MP is the LP relaxation of problem~(\ref{csp})  (IP for short), we know that the feasible region of IP is a subset of the feasible region of MP, which means that:
\begin{equation*}
\begin{aligned}
\forall &\boldsymbol{x} \in \left\{\boldsymbol{x}~\bigg|\sum_{p \in \mathcal P} a_{k p} x_p \geq b_k, \forall k \in [K], x_p \in I, \forall p \in \mathcal P\right\} \\
\implies &\boldsymbol{x} \in \left\{\boldsymbol{x}~\bigg|\sum_{p \in \mathcal P} a_{k p} x_p \geq b_k, \forall k \in [K], x_p \geq 0, \forall p \in \mathcal P\right\}
\end{aligned}
\end{equation*}

Let $\boldsymbol{x^*}$ and $\boldsymbol{\hat{x}^*}$ denotes the optimal solution of RMP and IP respectively. By definition, we have
\begin{equation}
    Z_{MP} = \sum_{p \in \mathcal P} x^{*}_{p} \leq \sum_{p \in \mathcal P} \hat{x}^{*}_{p} = Z^{*}_{IP} 
\end{equation}

Finally, we prove that $Z^{*}_{IP}\leq Z_{IRMP}$. $\forall \boldsymbol{x} \in \{\boldsymbol{x}~|\sum_{p \in \mathcal S} a_{k p} x_p \geq b_k, \forall k \in [K], x_p \in I, \forall p \in \mathcal S\} $, by letting $x_p=0,\forall p \in \mathcal P \backslash \mathcal S$, we can always construct a feasible solution $\boldsymbol{\tilde{x}}=(\boldsymbol{x},x_p)$, which satisfies that $\boldsymbol{\tilde{x}} \in \{\boldsymbol{x}~|\sum_{p \in \mathcal P} a_{k p} x_p \geq b_k, \forall k \in [K], x_p \in I, \forall p \in \mathcal P\} $. Let ${x}^*$  represents the optimal solution of IRMP, and $\tilde{x}$ is the corresponding feasible solution of IP constructed by ${x}^*$, then we have
\begin{equation}
    Z^{*}_{IP}\leq \sum_{p \in \mathcal P} \tilde{x}_{p} = \sum_{p \in \mathcal S} {x}^{*}_{p}+\sum_{p \in \mathcal P\backslash \mathcal S} {x}_{p}
    =\sum_{p \in \mathcal S} {x}^{*}_{p}=Z_{IRMP}
\end{equation}
Therefore, we conclude that $Z_{RMP}=Z_{MP}\leq Z^{*}_{IP}\leq Z_{IRMP}$.
\end{proof}

\begin{proposition}
\label{gap}
If problem~(\ref{csp}) is feasible, let \\
$Z_{UB}=min\{\sum_{p \in \mathcal C} x_p| \sum_{p \in \mathcal C} a_{k p} x_p \geq b_k, \forall k \in [K], x_p \in I, \forall p \in \mathcal C\}$, $Z_{LB}=(\frac{\bar{\pi}}{\bar{\pi}A^{min}})b$,\\
then we can obtain the following bound relation:

\begin{equation}
    Z_{LB}\leq Z_{IP}^{*}\leq Z_{UB}
\end{equation}
where $C$ is the base column set, $\bar{\pi}$ is the optimal dual variables corresponding to $C$, $A^{min}$ is the optimal column find by $\bar{\pi}$.
\end{proposition}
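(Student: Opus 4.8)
The plan is to establish the two inequalities $Z_{LB}\le Z^{*}_{IP}$ and $Z^{*}_{IP}\le Z_{UB}$ separately, in both cases recycling arguments already developed for Proposition~\ref{bound}.

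For the upper bound I would simply repeat the zero-padding argument used in the proof of Proposition~\ref{bound} to get $Z^{*}_{IP}\le Z_{IRMP}$. Since the base column set $\mathcal C$ is a subset of the set $\mathcal P$ of all feasible patterns, any integer vector feasible for the restricted integer program over $\mathcal C$ extends — by setting $x_p=0$ for every $p\in\mathcal P\setminus\mathcal C$ — to an integer point feasible for~(\ref{csp}) with the same objective value. Applying this to an optimizer of the restricted integer program yields $Z^{*}_{IP}\le Z_{UB}$. The only hypothesis needed here, beyond feasibility of~(\ref{csp}), is that the restricted integer program over $\mathcal C$ is itself feasible (otherwise $Z_{UB}$ is not even well defined); in practice $\mathcal C$ always contains enough columns — e.g. the initialization patterns — for this to hold.

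For the lower bound, the idea is the classical Farley (Lagrangian dual) bound for column generation. By Proposition~\ref{bound} we have $Z_{MP}\le Z^{*}_{IP}$, so it suffices to prove $Z_{LB}\le Z_{MP}$. The LP dual of MP is $\max\{ \pi^{\top}b : \pi^{\top}a_p\le 1,\ \forall p\in\mathcal P,\ \pi\ge 0\}$. The RMP dual optimum $\bar\pi$ is nonnegative but typically violates these constraints, since a pattern outside $\mathcal C$ may price out with negative reduced cost $1-\bar\pi^{\top}a<0$. The remedy is to rescale: set $\mu:=\bar\pi^{\top}A^{min}$, and note that the column $A^{min}$ returned by the pricing problem~(\ref{subproblem}) is precisely a feasible pattern attaining $\max_{p\in\mathcal P}\bar\pi^{\top}a_p$, so $\bar\pi^{\top}a_p\le\mu$ for every feasible pattern $p$; moreover, complementary slackness at the RMP optimum forces $\bar\pi^{\top}a_p=1$ for at least one basic pattern (using $b\ge 0$, $b\ne 0$), hence $\mu\ge 1>0$. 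Therefore $\bar\pi/\mu$ is well defined, nonnegative, and dual-feasible for MP, and weak LP duality gives $Z_{LB}=\bar\pi^{\top}b/(\bar\pi^{\top}A^{min})=b^{\top}(\bar\pi/\mu)\le Z_{MP}\le Z^{*}_{IP}$.

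The zero-padding step and the weak-duality estimate are routine. The step that requires care — and the main obstacle — is the claim that $\bar\pi/\mu$ is dual-feasible for the \emph{full} MP and not merely for RMP: this rests on identifying $A^{min}$ with the global optimum of the pricing problem (so that no feasible pattern $p$ has $\bar\pi^{\top}a_p>\mu$) and on verifying $\mu>0$, which both legitimizes the normalization and preserves the direction of the inequalities. A secondary point to nail down is the feasibility caveat implicit in the statement — that the RMP over $\mathcal C$, and its integer restriction, are feasible so that $\bar\pi$ and $Z_{UB}$ exist.
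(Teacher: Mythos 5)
Your proof is correct and follows essentially the same route as the paper: the upper bound $Z^{*}_{IP}\le Z_{UB}$ via zero-padding an optimizer of the restricted integer program (exactly as in the paper's proof of $Z^{*}_{IP}\le Z_{IRMP}$), and the lower bound by chaining $Z_{LB}\le Z_{MP}$ with $Z_{MP}\le Z^{*}_{IP}$ from Proposition~\ref{bound}. The only difference is that where the paper simply cites Farley's note for $Z_{LB}\le Z_{MP}$, you supply the underlying argument (rescaling $\bar\pi$ by $\bar\pi^{\top}A^{min}$ to obtain a dual-feasible point for the full MP and invoking weak duality), which is a faithful and correctly justified expansion of that citation rather than a different approach.
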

\begin{proof}[Proof of Proposition~\ref{gap}]
According to \cite{farley1990note}, Farley's bound $Z_{LB}$ provides a lower bound on LP relaxation of problem~(\ref{csp}), therefore, we have $Z_{LB}\leq Z_{MP}$. From proposition2, we know that $Z_{MP} \leq Z_{IP}^{*}$, then we obtain that $Z_{LB}\leq Z_{IP}^{*}$. 

Since $\mathcal C$ denotes the base column set, we know that $\mathcal C\in \mathcal P$, similar to the proof of $Z_{IP}^{*} \leq Z_{IRMP}$, we get $Z_{IP}^{*} \leq Z_{UB}$.

Therefore, we conclude that $Z_{LB}\leq Z_{IP}^{*}\leq Z_{UB}$ and the $gap=Z_{UB}-Z_{LB}$ provides an upper bound on the distance between $Z_{IP}^{*}$ and $Z_{UB}$.
\end{proof}

\begin{remark}
    When earlying stopping is used, CG will not converge to the optimal solution of MP, and then we can use $Z_{LB}$ to bound the distance between $Z_{UB}$ and the optimal solution $Z_{IP}^{*}$.
\end{remark}

\section{Experimental details}
\label{app:experiemnts}

In this section, we introduce the problems used in our evaluation and hyper-parameters of our ensemble methods. We open source our code with default parameters at \href{https://github.com/AnnonymousAuthor/KDD398}{this repo}.

\subsection{Problem Settings}

We briefly introduce the problem settings used in evaluation. For more detials, please refer to our open sourced code.

\textbf{CSPs with linear cutting constraints} 
For different $K$, we randomly generate the container CPU sizes $v_k$ and numbers $b_k$. For each problem, the of the container sizes are random integer in $[1,70]$, the containers numbers are random integer in $[50,200]$. The CPU size $V$ of each node is $127.58$.

\textbf{CSPs with nonlinear cutting constraints } We have 30 base services with container sizes following the Gaussian distributions as shown in Table \ref{tab:nonlinear_synthetic_stats}. For each problem, we sample $K$ services from the pool without replacement. Besides, the node capacity $V=31.58$\footnote{This strange number is inherited from \cite{yan2022solving}, where the hardware has 32 cores but the host operating system requires $0.42$ cores for management work.}. 

\begin{table}
    \centering
    \scalebox{0.75} 
    {
    \begin{tabular}{c|cccccccccc}
        \toprule
        Means & 0.73& 0.84& 0.97& 1.01& 1.06& 1.06& 1.06& 1.07& 1.12& 1.17 \\
        Std & 1.73& 0.47& 0.43& 2.69& 0.85& 0.19& 0.9& 0.82& 0.97& 0.62 \\
        Number & 270& 55& 1618& 904& 576& 1085& 1035& 118& 1450& 313 \\
        \midrule
        Means & 1.51& 1.52& 1.56& 1.57& 1.94& 1.96& 2.41& 2.42& 2.46& 2.47 \\
        Std & 0.31& 0.62& 0.84& 0.57& 0.7& 0.55& 0.9& 1.15& 1.95& 0.99 \\
        Number & 44& 544& 697& 427& 363& 360& 701& 1425& 305& 228 \\
        \midrule
        Means & 2.48& 2.48& 2.49& 2.52& 2.59& 3.33& 3.81& 4.12& 6.18& 6.97 \\
        Std & 0.66& 0.75& 0.25& 1.23& 1.21& 0.47& 0.87& 1.28& 0.46& 0.62 \\
        Number & 1552& 378& 606& 180& 293& 1424& 501& 1019& 1580& 405 \\  
        \bottomrule
    \end{tabular}
    }
    \caption{Service statistics of the nonlinear dataset.}
    \label{tab:nonlinear_synthetic_stats}
\end{table}

\subsection{Hyper-parameters of Ensemble CG}
The main hyper-parameters of our ensemble methods are as follows.
\begin{itemize}
\item num\_path: the number of paths in ensembles;
\item depth: the maximum length of paths in ensembles;
\item num\_column: the maximum number of columns returned once by solve subproblem;
\item dual\_objective\_threshold: the threshold of dual objective change for breaking;
\item dual\_throttles: the number of exceeding dual\_objective\_threshold for breaking; 
\item sub\_time\_limit: the time limit allowed to solve the subproblem;
\item sub\_throttles: the number of exceeding sub\_time\_limit for breaking;
\item csp\_time\_limit: the time limit allowed to solve the CSP problem;
\item selection\_times: the number of filtering the column set after CG;
\item convergence: whether strictly iterate to converge to a criteria.
\end{itemize}
\subsection{Linear experiment results for different K}
\label{exp re}
For linear CSP, we also test the performance of the algorithm at other sizes $K$. $K$ in experiment 1 is $5,10,15,20,30$, and $K$ in experiment 2 is $20, 30, 50, 80, 100$. The absolute gap is shown in the Fig. \ref{fig:exp1} and Fig.\ref{fig:exp2}. We can find that even for different $K$, the solution quality of our algorithm is still significantly better than the baseline.
\begin{figure}[h]
  \centering
\includegraphics[width=0.45\textwidth]{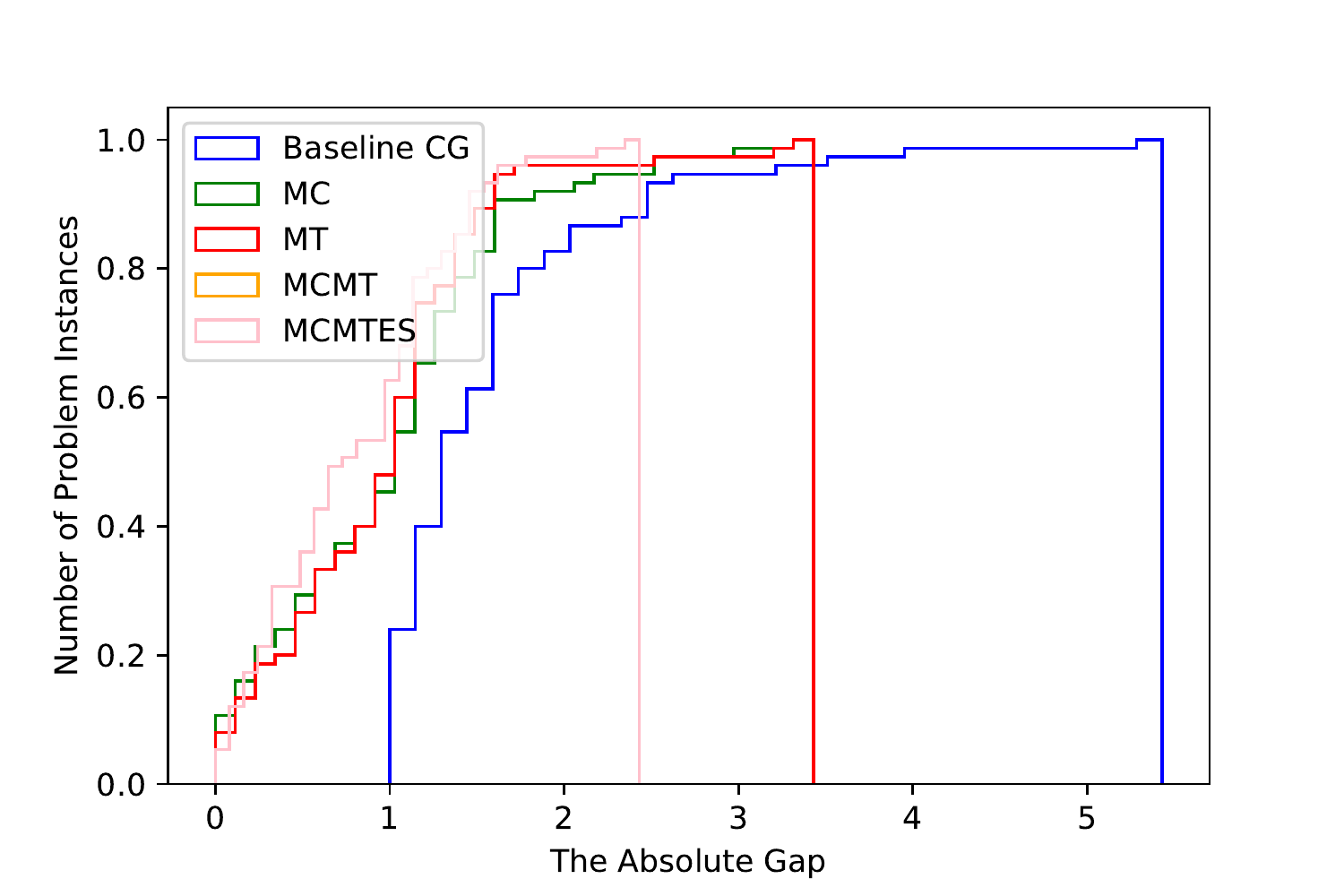}
  \caption{Absolute gap of experiment 1}
  \label{fig:exp1}
\end{figure}

\begin{figure}[h]
  \centering
  \includegraphics[width=0.45\textwidth]{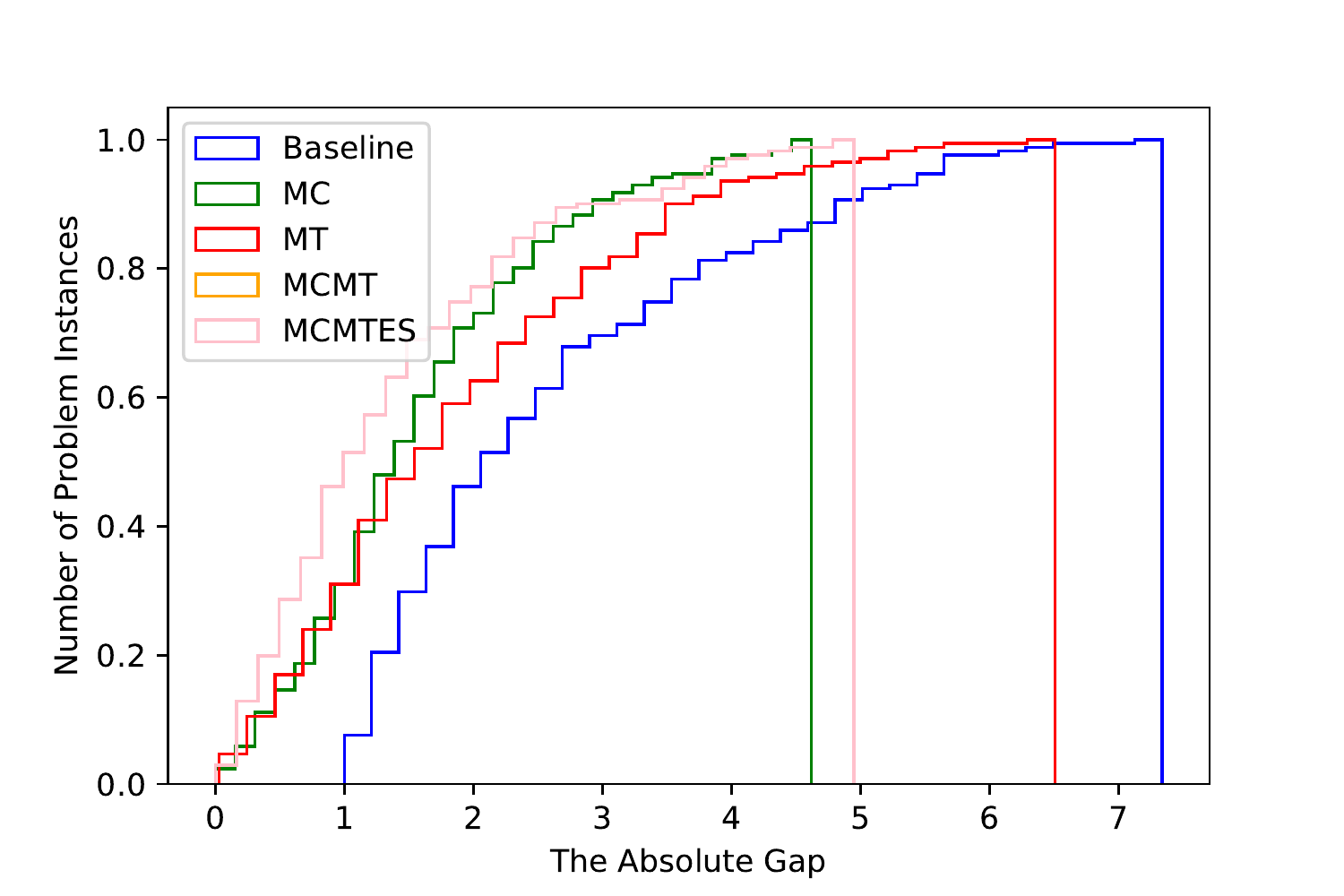}
  \caption{Absolute gap of experiment 2}
  \label{fig:exp2}
\end{figure}
\end{document}